\numberwithin{equation}{section}
\theoremstyle{plain}
\newtheorem{lemma}{Lemma}[section]
\newtheorem{theorem}[lemma]{Theorem}
\newtheorem{proposition}[lemma]{Proposition}
\newtheorem{corollary}[lemma]{Corollary}
\theoremstyle{definition}
\theoremstyle{remark}
\newtheorem{remark}[lemma]{Remark} 
\renewcommand{\dim}{\operatorname{dim}}
\newcommand{\depth}{\operatorname{depth}}
\newcommand{\Ext}{\operatorname{Ext}}
\newcommand{\hh}{\operatorname{H}}
\newcommand{\ee}{{}^{e}\!}
\newcommand{\ef}{{}^{e+1}\!}
\newcommand{\vv}{\operatorname{v}}
\newcommand{\Hom}{\operatorname{Hom}}
\newcommand{\id}{\operatorname{id}}
\newcommand{\im}{\operatorname{im}}
\newcommand{\fd}{\operatorname{fd}}
\renewcommand{\le}{\leqslant}
\renewcommand{\ge}{\geqslant}
\newcommand{\Spec}{\operatorname{Spec}}
\newcommand{\Tor}{\operatorname{Tor}}
\newcommand{\pd}{\operatorname{pd}}
\newcommand{\RHom}{\operatorname{\mathsf{R}Hom}}
\newcommand{\lotimes}{\otimes^{\mathbf L}}
\newcommand{\sfD}{\mathsf D}
\newcommand{\fm}{\mathfrak{m}} 
\newcommand{\fp}{\mathfrak{p}}
\newcommand{\fn}{\mathfrak{n}} 
\newcommand{\fq}{\mathfrak{q}}
\begin{document}

\title[Frobenius and Cohen-Macaulay rings]{Frobenius and homological dimensions of complexes}

\author[T.\ Funk]{Taran Funk}

\address{University of Nebraska-Lincoln, Lincoln, NE 68588, U.S.A.}
\email{taran.funk@huskers.unl.edu}

\author[T.\ Marley]{Thomas Marley}

\address{University of Nebraska-Lincoln, Lincoln, NE 68588, U.S.A.}
\email{tmarley1@unl.edu}

\urladdr{http://www.math.unl.edu/~tmarley1}

\date{\today}

\bibliographystyle{amsplain}

\keywords{Frobenius endomorphism, flat dimension, injective dimension, complete intersection}

\subjclass[2010]{13D05; 13D07, 13A35}

\begin{abstract} 
It is proved that a module $M$ over a Noetherian local ring $R$ of prime characteristic and positive dimension  has finite flat dimension if  $\Tor_i^R(\ee R, M)=0$ for $\dim R$ consecutive positive values of $i$
and infinitely many $e$.  Here $\ee R$ denotes the ring $R$ viewed as an $R$-module via the $e$th iteration of the Frobenius endomorphism.  In the case $R$ is Cohen-Macualay, it suffices that the Tor vanishing above holds for a single $e\ge \log_p e(R)$, where $e(R)$ is the multiplicity of the ring.  This improves a result of D. Dailey, S. Iyengar, and the second author \cite{DIM}, as well as generalizing a theorem due to C. Miller \cite{Mi} from finitely generated modules to arbitrary modules.   We also show that if $R$ is a complete intersection ring then the vanishing of $\Tor_i^R(\ee R, M)$ for single positive values of $i$ and $e$ is sufficient to imply $M$ has finite flat dimension.  This extends a result of L. Avramov and C. Miller \cite{AM}.
 \end{abstract}

\maketitle

\section{Introduction}
For the past half-century the Frobenius endomorphism has proved to be an effective tool for characterizing when a given finitely generated module $M$ over a commutative Noetherian local ring $R$ of prime characteristic $p$ has certain homological properties.   In 1973 Peskine and Szpiro \cite{PS} proved that if a finitely generated module $M$ has finite projective dimension then $\Tor_i^R(\ee R, M)=0$ for all positive integers $i$ and $e$, where $\ee R$ denotes the ring $R$ viewed as an $R$-module via the $e$th iteration of the Frobenius endomorphism.    Shortly thereafter, Herzog \cite{He} proved the converse:  In fact, he showed that if  $\Tor_i^R(\ee R, M)=0$ for all $i>0$ and infinitely many $e$ then $M$ has finite projective dimension.    Some twenty years later, Koh and Lee \cite{KL} established a stronger version of Herzog's result:  If for some $e$ sufficiently large $\Tor_i^R(\ee R, M)=0$ for $\depth R +1$ consecutive positive values of $i$ then $M$ has finite projective dimension.    Later,  Miller \cite{Mi} showed that if $R$ is CM of positive dimension then $\dim R$ consecutive vanishings of $\Tor_i^R(\ee R, M)$  for some $e$ sufficiently large implies $M$ has finite projective dimension.   

One may ask to what extent do the above results hold for arbitrary (i.e., not necessarily finitely generated) modules.  As $\Tor$ detects flatness rather than projectivity,  we seek conditions which imply a given module has finite flat dimension.  (It is a deep result of Jensen \cite[Proposition 6]{J} and Raynaud and Gruson \cite[Seconde partie, Th\'{e}or\`{e}me 3.2.6]{RG} that, in the case $R$ has finite Krull dimension, a module $M$ has finite flat dimension if and only if it has finite projective dimension.  We choose not to make use of this result in this paper, however.)   In \cite{MW}, the second author together with M. Webb proved the analogue of Peskine and Szpiro's result for modules of finite flat dimension;  that is, if $M$ has finite flat dimension then $\Tor_i^R(\ee R, M)=0$ for all positive integers $i$ and $e$.  Further, it was shown 
that that the analogue of Herzog's result holds for arbitrary modules as well.    Subsequently, Dailey, Iyengar and the second author \cite{DIM} showed that if $\Tor_i^R(\ee R, M)=0$ for $\dim R+1$ consecutive positive values of $i$ and infinitely many $e$, then $M$ has finite flat dimension.  The question remains whether, as in the results of Koh, Lee, and Miller, one can get by with fewer consecutive vanishings for arbitrary modules when $\dim R>0$.


In the present paper,  we show that in fact  $\dim R$ consecutive vanishings of $\Tor_i^R(\ee R, M)$ for positive values of $i$ and infinitely many $e$ is sufficient to prove that $M$ has finite flat dimension if $\dim R>0$; if  $R$ is Cohen-Macaulay, it suffices to show these vanishings hold for some $e$ greater than the multiplicity of the ring.   

We also prove in the case $R$ is a local complete intersection ring that the vanishing of $\Tor_i^R(\ee R, M)$ for some positive integers $i$ and $e$ imply that $M$ has finite flat dimension.    This generalizes a result of Avramov and Miller \cite{AM}, who established this for finitely generated modules.   We also show that all of the above results are valid for complexes.   The following theorem summarizes our main results:

\begin{theorem}  \label{main-theorem} Let $(R,\fm)$ be a local ring of prime characteristic $p$ and positive dimension $d$.  Let $M$ be an $R$-complex such that $\hh_*(M)$ is bounded above.  The following are equivalent:
\begin{enumerate}[(a)]
\item $M$ has finite flat dimension;
\item There exists $t>\sup \hh_*(M)$ such that $\Tor_i^R(\ee R, M)=0$ for $t\le i\le t+d-1$ and infinitely many $e$.
\end{enumerate}
If $R$ is Cohen-Macaulay, condition (a) is equivalent to:
\begin{enumerate}[(c)]
\item There exists $t>\sup \hh_*(M)$ such that $\Tor_i^R(\ee R, M)=0$ for $t\le i\le t+d-1$ for some $e\ge \log_p e(R)$, where $e(R)$ denotes the multiplicity of $R$.
\end{enumerate}
If $R$ is a local complete intersection of arbitrary dimension, then condition (a) is equivalent to:
\begin{enumerate}[(d)]
\item $\Tor_i^R(\ee R, M)=0$ for some $i>\sup \hh_*(M)$ and some $e>0$.
\end{enumerate}
\end{theorem}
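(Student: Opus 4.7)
The plan is to prove the converses (b) $\Rightarrow$ (a), (c) $\Rightarrow$ (a), and (d) $\Rightarrow$ (a), since the direct implications follow uniformly from the complex-level Peskine--Szpiro theorem of Marley--Webb: finite flat dimension forces $\Tor_i^R(\ee R, M)=0$ for all $i>\sup\hh_*(M)$ and all $e\ge1$. For each converse, the first step is to reduce to the module case. Take a semi-flat resolution $F \xra{\simeq} M$ and let $N = \ker(F_{t-1}\to F_{t-2})$; since $t>\sup\hh_*(M)$, the complex $F$ is exact in degrees $\ge t-1$, so $F_{\ge t}$ is a flat resolution of $N$. Consequently $M$ has finite flat dimension if and only if $N$ does, and dimension shifting yields $\Tor_i^R(\ee R, N) \cong \Tor_{i+t-1}^R(\ee R, M)$ for $i\ge 1$. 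Each converse thus becomes a module-theoretic claim about $N$ with Tor vanishings starting at degree $1$.

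For (b) $\Rightarrow$ (a), the module $N$ satisfies $\Tor_i^R(\ee R, N)=0$ for $1\le i\le d$ and infinitely many $e$. The Dailey--Iyengar--Marley theorem \cite{DIM} would apply with $d+1$ such consecutive vanishings, so the plan is to leverage the infinitely-many-$e$ hypothesis to obtain one additional vanishing: pick an element $x\in\fm$ outside every minimal prime of $R$, pass to $\bar R = R/xR$ of dimension $d-1$, and run a change-of-rings argument along a subsequence of the given $e$'s to produce $\Tor_{d+1}^R(\ee R, N)=0$ for some $e$. The delicate point is making this argument compatible with Frobenius, since $\ee R\otimes_R\bar R$ and $\ee\bar R$ do not coincide in general, so one must compare these objects carefully.

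For (c) $\Rightarrow$ (a), the Cohen--Macaulay hypothesis supplies a system of parameters $\bsx=x_1,\ldots,x_d$ that is a regular sequence, and the Koszul complex $\kos{\bsx^{p^e}}{R}$ resolves $R/(\bsx^{p^e})$, whose length equals $p^{ed}\length(R/(\bsx))$. The bound $p^e\ge e(R)$ forces this length to dominate the multiplicity sufficiently that, in the spirit of Miller \cite{Mi}, a Koszul-theoretic argument promotes the $d$ given vanishings to $d+1$ consecutive ones for this single $e$, after which \cite{DIM} concludes. For (d) $\Rightarrow$ (a), one invokes rigidity of $\Tor$ over a complete intersection: a single vanishing $\Tor_i^R(\ee R, N)=0$ propagates to vanishing of all $\Tor_{j}^R(\ee R, N)$ for $j\ge i$, whereupon the complex-level Herzog theorem of Marley--Webb yields finite flat dimension. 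The main obstacle across all three converses is replacing the finite-generation arguments of Miller and Avramov--Miller by the hypothesis at hand (infinitely many $e$, a quantitative $e$, or CI rigidity); for (d) specifically this means extending the Avramov--Miller rigidity theorem \cite{AM} from finitely generated modules to arbitrary ones, likely via a faithful-flatness reduction.
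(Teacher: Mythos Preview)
Your reduction to a module and the direct implications are fine, but each proposed converse has a genuine gap, and for (b) and (c) the paper takes a structurally different route.

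For (b)$\Rightarrow$(a) and (c)$\Rightarrow$(a), the paper does \emph{not} try to manufacture a $(d{+}1)$st Tor vanishing and then quote \cite{DIM}. Instead it first passes to an $F$-finite ring by faithfully flat base change, Matlis-dualizes so that the hypothesis becomes $\Ext^i_R(\ee R, M^{\vv})=0$, and proves finiteness of \emph{injective} dimension by analyzing a minimal injective resolution. The induction on $d$ is via \emph{localization} at primes $\fp\ne\fm$, which commutes with Frobenius in the $F$-finite setting; this sidesteps exactly the obstruction you flag, that $\ee R\otimes_R R/xR\not\cong\ee(R/xR)$. Your quotient-by-a-parameter plan names this obstruction but supplies no mechanism to overcome it, and I do not see one: any change-of-rings comparison between $\Tor^R(\ee R,-)$ and $\Tor^{\bar R}(\ee\bar R,-)$ needs control of $\Tor^R_*(\ee R,\bar R)$, which is typically nonzero in all positive degrees when $R$ is singular. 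Likewise, your ``Koszul-theoretic promotion'' for (c) is only a hope: Miller's argument in \cite{Mi} uses finite generation in an essential way (length/rank counts), and you offer no substitute for arbitrary $N$. The paper's Cohen--Macaulay argument instead exploits that a minimal reduction $(\bsx)$ of the maximal ideal of $\ee R$ satisfies $\fm\cdot \ee R/(\bsx)=0$ once $p^e\ge e(R)$, and uses this to force a differential in the minimal injective resolution to be surjective.

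For (d)$\Rightarrow$(a), your outline has a logical gap even granting rigidity in $i$: obtaining $\Tor_j^R(\ee R,N)=0$ for all $j\ge i$ and a \emph{single} $e$ does not feed into the Herzog-type theorem of \cite{MW}, which requires infinitely many $e$. The paper follows Dutta's argument \cite{D}, whose steps remain valid for arbitrary bounded-above complexes and yield from one vanishing both $\Tor_{i+1}^R(\ee R,M)=0$ and $\Tor_i^R(\ef R,M)=0$; iterating gives vanishing for all $j\ge i$ and all sufficiently large $e$, after which \cite{DIM} applies. The ingredient missing from your plan is the propagation in $e$, not merely in $i$.
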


Analogous results hold for $\Ext^i_R(\ee R, M)$ and injective dimension in the case the Frobenius endomorphism is finite.  In fact, with the exception of the proof of (d) implies (a), our method of proof
is to first establish the results for injective dimension and then deduce the corresponding statements for flat dimension using standard arguments.

\medskip
{\it Acknowledgment:}  We thank Olgur Celikbas and Yongwei Yao for pointing out an error in the proof of Theorem 3.1 in an earlier version of this paper.

\section{Preliminaries}

Throughout this paper $(R, \fm, k)$ will denote a commutative Noetherian local ring with maximal ideal $\fm$ and residue field $k$.  In the case $R$ has prime characteristic $p$, we let $f:R\to R$
denote the Frobenius endomorphism; i.e., $f(r)=r^p$ for every $r\in R$.   For an integer $e\ge 1$ we let $\ee R$ denote the ring $R$ viewed as an $R$-algebra via $f^e$; i.e., for $r\in R $ and $s\in \ee R$, $r\cdot s:= f^e(r)s=r^{p^e}s$.   If $\ee R$ is finitely generated as an $R$-module for some (equivalently, all) $e>0$, we say that $R$ is {\it $F$-finite}.

We refer the reader to \cite{AF1991} for terminology and conventions regarding complexes.   If $M$ is an $R$-complex, we write $M_*$ (respectively, $M^*$) to emphasize when we are indexing $M$ homologically  (respectively, cohomologically).
It will occasionally be useful to work in the derived category of $R$, which will be denoted by $\sfD (R)$.  We use the symbol `$\simeq$' to denote  an isomorphism in $\sfD (R)$.  

We first establish how the $R$-algebra $\ee R$ (i.e., restriction of scalars) behaves with respect to flat extensions.  Much of this is folklore, but we include it for the reader's convenience.

\begin{lemma} \label{flatsquare} Consider a commutative square of ring homomorphisms:
$$
\begin{tikzcd} 
A \arrow {r} \arrow{d} & B \arrow{d} \\
C\arrow{r} & D
\end{tikzcd}
$$
where $B$ is flat over $A$, and $D$ is flat over $C$.
Then for any $A$-complex $M$ and any $C$-complex $N$ one has for each $i$ an isomorphism of $D$-modules
$$\Tor_i^A(M,N)\otimes_C D\cong \Tor_i^B(M\otimes_A B, N\otimes_C D).$$ 
\end{lemma}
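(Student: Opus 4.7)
My plan is to establish the stronger statement
$$(M \lotimes_A N) \otimes_C D \simeq (M \otimes_A B) \lotimes_B (N \otimes_C D)$$
in the derived category $\sfD(D)$, and then to take $\hh_i$ to recover the claimed identity.

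Two ingredients will suffice: (i) if a ring map is flat, the corresponding derived tensor product collapses to the ordinary one; and (ii) derived tensor product is associative along any pair of compatible ring maps. Since $D$ is $C$-flat, $(-)\otimes_C D$ agrees with $(-)\lotimes_C D$; similarly $M \otimes_A B \simeq M \lotimes_A B$ because $B$ is $A$-flat. Using these, one has
$$(M \lotimes_A N) \otimes_C D \simeq (M \lotimes_A N) \lotimes_C D \simeq M \lotimes_A (N \lotimes_C D) \simeq M \lotimes_A (N \otimes_C D),$$
while on the other side
$$(M \otimes_A B) \lotimes_B (N \otimes_C D) \simeq (M \lotimes_A B) \lotimes_B (N \otimes_C D) \simeq M \lotimes_A (B \lotimes_B (N \otimes_C D)) \simeq M \lotimes_A (N \otimes_C D).$$
Comparing the two strings of quasi-isomorphisms produces the desired one.

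At a concrete level this amounts to the following. Pick a semi-projective resolution $P \to M$ over $A$. Because $B$ is $A$-flat, $P \otimes_A B$ is a semi-projective resolution of $M \otimes_A B$ over $B$; because $D$ is $C$-flat, tensoring over $C$ with $D$ commutes with taking homology. Both sides of the asserted isomorphism therefore reduce to $\hh_i(P \otimes_A N \otimes_C D)$ via the degree-wise natural identifications $(P \otimes_A B) \otimes_B (N \otimes_C D) \cong P \otimes_A (N \otimes_C D) \cong P \otimes_A N \otimes_C D$. The only real subtlety is the existence and well-behavedness of semi-projective resolutions for arbitrary (possibly unbounded) complexes, which is standard in the framework of \cite{AF1991}; beyond this point the proof is a purely formal manipulation and I do not expect any serious obstacle.
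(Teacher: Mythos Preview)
Your proposal is correct and follows essentially the same approach as the paper: both establish the derived-category isomorphism $(M\lotimes_A N)\otimes_C D \simeq (M\otimes_A B)\lotimes_B (N\otimes_C D)$ by combining associativity of $\lotimes$ with the flatness hypotheses to collapse $\lotimes$ to $\otimes$, and then take homology. Your added paragraph with the explicit semi-projective resolution is a helpful unpacking but not a different method.
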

\begin{proof} We have the following isomorphisms in $\sfD(D)$:
$$
\begin{aligned} (M\lotimes_A N)\otimes_C D&\simeq (M\lotimes_A D)\lotimes_C N\\
&\simeq M\lotimes_A (B\lotimes_B D)\lotimes_C N \\
&\simeq (M\otimes_A B)\lotimes_B (D\otimes_C N).
\end{aligned}
$$
Taking homology and using that $-\otimes_C D$ is exact gives the desired result.
\end{proof}

\begin{corollary} \label{Frob-flat-ext} Suppose $R$ has prime characteristic and $S$ is a flat $R$-algebra.  Let $M$ be an $R$-complex and $e$ a positive integer.  Then for each $i$ there is an isomorphism of $\ee S$-modules
$$\Tor_i^R(M,\ee R)\otimes_{\ee R} \ee S \cong \Tor_i^S(M\otimes_R S, \ee S).$$
\end{corollary}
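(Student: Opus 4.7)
The plan is to apply Lemma~\ref{flatsquare} to the commutative square of ring homomorphisms
$$\begin{tikzcd} R \arrow{r} \arrow{d} & S \arrow{d} \\ \ee R \arrow{r} & \ee S \end{tikzcd}$$
in which the two horizontal maps are the given structure map $R\to S$ (the lower one uses the identification of $\ee R$ and $\ee S$ with $R$ and $S$ as abstract rings), and the two vertical maps are the $e$th iterates of the Frobenius endomorphism. Commutativity is immediate: both paths send $r\in R$ to the image of $r^{p^e}$ in $S$. The upper horizontal map is flat by hypothesis, and the lower horizontal map, having the same underlying function between the same underlying rings, is flat as well.

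Next, I would take $M$ as the $A$-complex and $N=\ee R$ as the $C$-complex in Lemma~\ref{flatsquare}, where $A=R$, $B=S$, $C=\ee R$, and $D=\ee S$. Since the map $A\to C$ is the Frobenius $f^e\colon R\to\ee R$, the induced $A$-module structure on $N$ is precisely the usual $R$-module structure on $\ee R$, so $\Tor_i^A(M,N)=\Tor_i^R(M,\ee R)$. Moreover $N\otimes_C D=\ee R\otimes_{\ee R}\ee S=\ee S$ and $M\otimes_A B=M\otimes_R S$, so the conclusion of Lemma~\ref{flatsquare} reads exactly
$$\Tor_i^R(M,\ee R)\otimes_{\ee R}\ee S\cong \Tor_i^S(M\otimes_R S,\ee S),$$
as desired.

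There is no serious obstacle here; the only delicate point is the standard bookkeeping distinction between $\ee R$ as a ring (canonically identified with $R$) and $\ee R$ as an $R$-algebra (via the Frobenius $f^e$), and analogously for $\ee S$. Once the square above is set up with this distinction made explicit, the corollary is a direct consequence of Lemma~\ref{flatsquare}.
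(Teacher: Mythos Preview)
Your proposal is correct and follows exactly the paper's own proof: set up the commutative square with Frobenius maps vertically and the structure map $R\to S$ horizontally, observe that flatness of $S$ over $R$ gives flatness of $\ee S$ over $\ee R$, and apply Lemma~\ref{flatsquare}. The only difference is that you spell out the bookkeeping (commutativity of the square, choice of $N=\ee R$) more explicitly than the paper does.
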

\begin{proof} We have a commutative square of ring maps:
$$
\begin{tikzcd} 
R \arrow {r} \arrow{d} & S \arrow{d} \\
\ee R\arrow{r} & \ee S
\end{tikzcd}
$$
Since $S$ is flat over $R$, $\ee S$ is flat over $\ee R$.  The result now follows from Lemma \ref{flatsquare}.
\end{proof}

\begin{lemma} \label{infinite-residue}
Let $(R,\fm)$ be a local ring of prime characteristic $p$ which is F-finite. Let $x$ be an indeterminate over $R$, $S:=R[x]_{\fm R[x]}$, and $T:=(\ee R)[x]_{\fn (\ee R)[x]}$, where $\fn$ is the maximal ideal of $\ee R$.   Then
\begin{enumerate}[(a)]
\item $\ee S$ is a free $T$-module of rank $p^e$.
\item $T$ is a finitely generated $S$-module.
\item $S$ is F-finite.
\item For each $R$-complex $M$ with $\hh^*(M)$ bounded below and for each $i$, there is an isomorphism of $\ee S$-modules
$$\Ext^i_S(\ee S, M\otimes_R S)\cong \Hom_T(\ee S, \Ext^i_R(\ee R, M)\otimes_R S).$$
\end{enumerate}
\end{lemma}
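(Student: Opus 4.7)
The overall plan is to handle (b) quickly from F-finiteness, prove (a) by a freeness-plus-localization argument, obtain (c) by combining (a) and (b), and derive (d) by combining flat base change with Hom-tensor adjunction, using (a) at the end to move cohomology inside.

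For (b), identify $T$ as an $S$-algebra with $\ee R \otimes_R S$: both rings arise by localizing the polynomial ring $(\ee R)[y] = R[y]$ at $\fn(\ee R)[y] = \fm R[y]$, and the $S$-structure on $\ee R \otimes_R S$ is via the right factor. Since $\ee R$ is finitely generated over $R$ by F-finiteness, $T \cong \ee R \otimes_R S$ is finitely generated over $S$.

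For (a), the observation is that $\ee S$ carries two compatible actions---ring multiplication by $\ee R \subset \ee S$ and the Frobenius-twisted action of $S$---which together assemble into the $T = \ee R \otimes_R S$-module structure on $\ee S$. Renaming the indeterminate of $T$ to $y$ so that $T = R[y]_{\fm R[y]}$ as a ring, one can trace through the identifications and see that $y \in T$ (corresponding to $1 \otimes x$) acts on $\ee S = R[x]_{\fm R[x]}$ as multiplication by $x^{p^e}$, while constants $r \in R \subset T$ act by ordinary multiplication. This $T$-structure is precisely the $R[y]_{\fm R[y]}$-structure on $R[x]_{\fm R[x]}$ induced by the ring map $R[y] \to R[x]$, $y \mapsto x^{p^e}$, and with respect to this map $R[x]$ is free of rank $p^e$ over $R[y]$ with basis $1, x, \dots, x^{p^e-1}$. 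The remaining step is to check that this freeness survives the relevant localization. Setting $W = R[y] \setminus \fm R[y]$, I would show $W^{-1} R[x] = S$ as follows: $W^{-1} R[x]$ is free of rank $p^e$ over the local ring $T$, hence is finite (so integral and semi-local); the residue ring $W^{-1} R[x]/\fm W^{-1} R[x] \cong k(y)[x]/(x^{p^e} - y) \cong k(x)$ is a field, forcing $W^{-1} R[x]$ to be local with maximal ideal $\fm W^{-1} R[x]$. Hence every element of $R[x] \setminus \fm R[x]$ is already a unit in $W^{-1} R[x]$, so $W^{-1} R[x] = S$, and $\ee S \cong T^{p^e}$ as $T$-modules. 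Part (c) is then immediate: $\ee S$ is finitely generated over $T$ by (a), and $T$ is finitely generated over $S$ by (b).

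For (d), I would assemble three ingredients in the derived category. First, a flat base change (the $\RHom$-analogue of Corollary~\ref{Frob-flat-ext}, valid because $\ee R$ is finitely generated, $R \to S$ is flat, and $\hh^*(M)$ is bounded below) gives $\Ext^i_S(T, M \otimes_R S) \cong \Ext^i_R(\ee R, M) \otimes_R S$. Second, restriction-coinduction adjunction along the ring map $S \to T$ (which is compatible with the Frobenius-twisted $S$-action on $\ee S$) yields the derived identity $\RHom_S(\ee S, M \otimes_R S) \simeq \RHom_T(\ee S, \RHom_S(T, M \otimes_R S))$. Third, by (a) the $T$-module $\ee S$ is projective, so $\RHom_T(\ee S, -) = \Hom_T(\ee S, -)$ is exact and commutes with cohomology. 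Concatenating,
\[
\Ext^i_S(\ee S, M \otimes_R S) \cong \Hom_T\bigl(\ee S,\, \Ext^i_S(T, M \otimes_R S)\bigr) \cong \Hom_T\bigl(\ee S,\, \Ext^i_R(\ee R, M) \otimes_R S\bigr).
\]
The principal obstacle is the bookkeeping in (a), in particular the verification that the smaller localization $W^{-1} R[x]$ already inverts every element of $R[x] \setminus \fm R[x]$; once (a) is established, (d) is a routine combination of adjunction and flat base change.
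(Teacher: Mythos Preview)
Your proposal is correct and follows essentially the same route as the paper. The paper sets $A=R[x]$, $B=(\ee R)[x]$, $C=(\ee R)[x^{1/p^e}]\cong \ee A$, observes that $C$ is free of rank $p^e$ over $B$ and $B$ is finite over $A$, and then asserts without detail that the three relevant localizations satisfy $B_V=B_U$ and $C_W=C_V$; your argument unpacks exactly these identifications (with $y=x^{p^e}$ in place of the $x^{1/p^e}$ device) and supplies the residue-field computation $k(y)[x]/(x^{p^e}-y)\cong k(x)$ that justifies the ``straightforward'' localization step. For part~(d) your chain of isomorphisms---flat base change $\Ext^i_R(\ee R,M)\otimes_R S\cong\Ext^i_S(T,M\otimes_R S)$, derived adjunction along $S\to T$, and exactness of $\Hom_T(\ee S,-)$ from~(a)---is precisely the chain the paper writes down, only traversed in the opposite direction.
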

\begin{proof}
Let $A=R[x]$, $B=(\ee R)[x]$, and $C=(\ee R)[x^{\frac{1}{p^e}}]\cong \ee A$.    Note that $C$ is a free $B$-module of rank $p^e$ and $B$ is a f.g. $A$-module.
Let $U=A\setminus \fm A$, $V=B\setminus \fn B$, and $W=C\setminus \fn C$.   Then $A_U=S$.  It is straightforward to check that $T=B_V=B_U$ and $\ee S=C_W=C_V$.   Hence, (a), (b), and (c) are immediate.

We have the following isomorphisms of $\ee S$-modules.  
$$
\begin{aligned} \Hom_T(\ee S, \Ext^i_R(\ee R, M)\otimes_R S)&\cong \Hom_T(\ee S, \Ext^i_S(T, M\otimes_R S))\\
&\cong \Hom_T(\ee S, \hh^i(\RHom_S (T, M\otimes_R S)))\\
&\cong \hh^i(\RHom_T(\ee S, \RHom_S(T, M\otimes_R S)))\\
&\cong \hh^i(\RHom_S(\ee S, M\otimes_R S)) \\
&\cong \Ext^i_S(\ee S, M\otimes_R S).
\end{aligned}
$$
The first isomorphism follows since $S$ is flat over $R$, $\ee R$ is finitely generated over $R$, and $\hh^*(M)$ is bounded below (see \cite[Lemma 4.4(F)]{AF1991}). The third isomorphism holds as $\ee S$ is a free $T$-module.

\end{proof}

\begin{corollary} \label{infinite-residue-cor} With the notation as in part (d) of Lemma \ref{infinite-residue},  for each $i$ we have that $\Ext^i_R(\ee R, M)=0$ if and only if $\Ext^i_S(\ee S, M\otimes_R S)=0$.
\end{corollary}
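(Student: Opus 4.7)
The plan is to apply part (d) of Lemma~\ref{infinite-residue}, which provides the natural isomorphism
$$\Ext^i_S(\ee S, M\otimes_R S)\cong \Hom_T(\ee S, \Ext^i_R(\ee R, M)\otimes_R S),$$
and then peel off the two functors $\Hom_T(\ee S,-)$ and $-\otimes_R S$ one at a time.

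For the forward direction ($\Ext^i_R(\ee R,M)=0 \Rightarrow \Ext^i_S(\ee S, M\otimes_R S)=0$) there is nothing to do beyond applying the isomorphism above, since tensoring the zero module with $S$ produces zero, and then $\Hom_T(\ee S, 0)=0$.

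For the reverse direction, I would argue in two steps. First, vanishing of the right-hand side of the displayed isomorphism gives $\Hom_T(\ee S, \Ext^i_R(\ee R,M)\otimes_R S)=0$. By part (a) of Lemma~\ref{infinite-residue}, $\ee S$ is a free $T$-module of (positive) rank $p^e$, so $\Hom_T(\ee S,-)$ is naturally isomorphic to the functor $(-)^{p^e}$ on $T$-modules; in particular it is a faithful functor, which forces $\Ext^i_R(\ee R,M)\otimes_R S=0$. Second, to descend from $S$ back to $R$, I would invoke faithful flatness of the extension $R\to S$: the map $R\to R[x]$ is faithfully flat, and localizing at $\fm R[x]$ (a prime lying over $\fm$) preserves this since $\fm S$ is a proper ideal of $S$. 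Therefore the vanishing of $\Ext^i_R(\ee R,M)\otimes_R S$ forces $\Ext^i_R(\ee R,M)=0$.

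There is no substantial obstacle here; the whole argument is a short consequence of Lemma~\ref{infinite-residue}(a) and (d) combined with faithful flatness of $R\to S$. The only point requiring a moment's care is checking that $R\to S$ is indeed faithfully flat and not merely flat, and that the $T$-module structure on $\Ext^i_R(\ee R,M)\otimes_R S$ appearing in part (d) is the one for which $\ee S$ is free of rank $p^e$, both of which are immediate from the setup in Lemma~\ref{infinite-residue}.
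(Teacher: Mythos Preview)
Your proof is correct and takes essentially the same approach as the paper: the paper simply states that $\Hom_T(\ee S,-)$ and $-\otimes_R S$ are faithful functors and invokes Lemma~\ref{infinite-residue}(d). You have merely supplied the justifications for faithfulness (freeness of $\ee S$ over $T$ from part (a), and faithful flatness of $R\to S$) that the paper leaves implicit.
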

\begin{proof} Since $\Hom_T(\ee S, -)$ and $-\otimes_R S$ are faithful functors, the result follows from part (4) of Lemma \ref{infinite-residue}.
\end{proof}

The following result is also well-known:

\begin{lemma} \label{duality-isos} Let $R$ be a commutative Noetherian ring, $M$, $N$ $R$-complexes, and $I$ an injective $R$-module.
\begin{enumerate} [(a)]
\item For all $i$ we have isomorphisms
$$\Hom_R(\Tor_i^R(M,N),I)\cong \Ext^i_R(M, \Hom_R(N,I)).$$
\item Suppose $\hh_*(M)$ is bounded below, $H_i(M)$ is finitely generated for all $i$, and $\hh^*(N)$ is bounded below.  Then for all $i$ we have isomorphisms
$$\Tor_i^R(M, \Hom_R(N,I))\cong \Hom_R(\Ext^i_R(M,N), I).$$
\end{enumerate}
\end{lemma}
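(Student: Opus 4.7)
The plan is to exploit the exactness of $\Hom_R(-,I)$ (which holds because $I$ is injective) together with the usual tensor-Hom adjunction and suitable semi-projective resolutions of $M$. The starting observation is that for any $R$-complex $C$ and any injective module $I$, one has a natural isomorphism $H^i(\Hom_R(C,I)) \cong \Hom_R(H_i(C),I)$, obtained by applying the exact contravariant functor $\Hom_R(-,I)$ to the short exact sequences relating cycles, boundaries, and homology of $C$ in each degree.

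For (a), I would choose a semi-projective resolution $P \xra{\simeq} M$, so that $\Tor_i^R(M,N) \cong H_i(P\otimes_R N)$. Applying the observation to $C = P \otimes_R N$ gives
$$\Hom_R(\Tor_i^R(M,N),I) \;\cong\; H^i(\Hom_R(P\otimes_R N, I)).$$
The standard tensor-Hom adjunction, carried out at the level of complexes, yields an isomorphism $\Hom_R(P\otimes_R N, I) \cong \Hom_R(P, \Hom_R(N,I))$; taking $H^i$ and using that $P$ is semi-projective produces $\Ext^i_R(M, \Hom_R(N,I))$, as required.

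For (b), the finiteness assumptions on $M$ allow me to pick a semi-projective resolution $P \xra{\simeq} M$ by finitely generated free modules which is bounded below. The crux is the natural isomorphism
$$F \otimes_R \Hom_R(X, I) \;\xra{\ \cong\ }\; \Hom_R(\Hom_R(F, X), I)$$
valid for any finitely generated free $F$ and any $R$-module $X$. Assembling these degreewise (using that $\hh^*(N)$ is bounded below so that $N$ may be represented by a bounded-above complex and the relevant Hom and tensor complexes converge in each degree) produces an isomorphism of complexes $P \otimes_R \Hom_R(N, I) \cong \Hom_R(\Hom_R(P, N), I)$. Taking $H_i$ of the left side yields $\Tor_i^R(M, \Hom_R(N, I))$; taking $H_i$ of the right side and applying the same $\Hom_R(-,I)$-exactness observation gives $\Hom_R(H_{-i}(\Hom_R(P,N)), I) = \Hom_R(\Ext^i_R(M,N), I)$, where the final equality uses that $P$ is semi-projective so that $\Hom_R(P,N)$ represents $\RHom_R(M,N)$.

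The main technical point I expect to watch carefully is the passage from the degreewise isomorphism to an isomorphism of total complexes in (b): a priori, the Hom-complex $\Hom_R(P,N)$ involves products over the unbounded set of degrees of $P$, and matching these with the direct sums appearing in $P \otimes_R \Hom_R(N,I)$ requires boundedness in one of the variables. This is precisely where the hypotheses that $\hh_*(M)$ is bounded below (with each $H_i(M)$ finitely generated) and $\hh^*(N)$ is bounded below come into play; with those in hand, the argument is routine.
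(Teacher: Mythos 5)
Your argument is correct and follows essentially the same route as the paper's: the paper works in the derived category, using exactness of $\Hom_R(-,I)$, tensor-Hom adjunction, and the cited isomorphism $M\lotimes_R \Hom_R(N,I)\simeq \Hom_R(\RHom_R(M,N),I)$ from \cite[Lemma 4.4(I)]{AF1991}, whereas you unpack the same ingredients at the chain level with an explicit bounded-below resolution by finitely generated free modules and the degreewise swindle $F\otimes_R\Hom_R(X,I)\cong\Hom_R(\Hom_R(F,X),I)$. Your attention to the convergence issue (why the Hom-complex is degreewise a finite product once $N$ is replaced by a bounded-above representative) correctly identifies exactly where the hypotheses of (b) are used.
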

\begin{proof} Using adjunction and \cite[Lemma 4.4(I)]{AF1991}, we have the following isomorphisms in $\sfD(R)$:
$$
\begin{aligned}
\Hom_R(M\lotimes_R N, I)&\simeq \RHom_R(M\lotimes_R N, I) \\
&\simeq \RHom_R(M, \RHom_R(N, I))\\
&\simeq \RHom_R(M, \Hom_R(N,I))
\end{aligned}
$$
and
$$
\begin{aligned}
M\lotimes_R \Hom_R(N,I)\simeq \Hom_R(\RHom_R(M,N), I).
\end{aligned}
$$
Taking homology and using that $\Hom_R(-,I)$ is an exact functor yields the desired isomorphisms.

\end{proof}

For an $R$-complex M, let $M^{\sharp}$ denote the complex which has the same underlying graded module as $M$ and whose differentials are all zero.
Let  $\fd_R M$ denote the flat dimension of $M$; that is, $$\fd_R M=\inf \{\sup \hh_*(F^{\sharp}) \mid F\simeq M, \ F \text{ semi-flat}\}.$$
Similarly, $\id_R M$ will denote the injective dimension of $M$, i.e.,
$$\id_R M=\inf \{\sup \hh^*(I^{\sharp}) \mid I\simeq M, \ I \text{ semi-injective}\}.$$

\begin{corollary} \label{id-fd} Let $(R,\fm)$ be a local ring,  $E=E_R(R/\fm)$, and let $(-)^{\vv}$ denote the functor $\Hom_R(-,E)$.  Let $M$ be an $R$-complex.  Then
\begin{enumerate}[(a)]
\item $\fd_R M\le\id_R M^{\vv}$ with equality if $\hh_*(M)$ is bounded below.
\item If $\hh^*(M)$ is bounded below, then $\id_R M= \fd_R M^{\vv}$
\end{enumerate}
\begin{proof} Using \cite[Proposition 5.3.F]{AF1991} and Lemma \ref{duality-isos} with $I=E$, we have:
$$
\begin{aligned} 
\fd_R M&=\sup  \{ j \mid \Tor^R_j(R/\fp,M)\neq 0 \text{ for some }\fp\in \Spec R\} \\
&= \sup \{ j \mid \Tor^R_j(R/\fp,M)^{\vv}\neq 0 \text{ for some }\fp\in \Spec R\} \\
&=\sup \{j \mid \Ext^j_R(R/\fp,M^{\vv})\neq 0 \text{ for some }\fp\in \Spec R\} \\
&\le \id_R M^{\vv},
\end{aligned}
$$
where equality holds in the last line if $\hh^*(M^{\vv})$ is bounded below, or equivalently, if $\hh_*(M)$ is bounded below.
Part (b) is proved similarly.

\end{proof}
\end{corollary}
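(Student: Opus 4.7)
My approach is to bound $\fd_R M$ and $\id_R M^{\vv}$ against each other via Matlis duality, combining Lemma \ref{duality-isos} with the standard characterization of flat and injective dimension as suprema of $j$ for which $\Tor^R_j(R/\fp,-)$, respectively $\Ext^j_R(R/\fp,-)$, is nonzero for some $\fp\in\Spec R$.

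For part (a), the starting point is the identity
$$\fd_R M = \sup\{j \mid \Tor^R_j(R/\fp, M) \neq 0 \text{ for some } \fp\in\Spec R\}$$
from \cite[Proposition 5.3.F]{AF1991}. Lemma \ref{duality-isos}(a) with $N = R/\fp$ and $I = E$ then gives a natural isomorphism $\Tor^R_j(R/\fp, M)^{\vv} \cong \Ext^j_R(R/\fp, M^{\vv})$. Using faithfulness of $(-)^{\vv}$ on the relevant modules, vanishing passes between the two sides, yielding
$$\fd_R M = \sup\{j \mid \Ext^j_R(R/\fp, M^{\vv}) \neq 0\} \le \id_R M^{\vv}.$$
For equality under the hypothesis that $\hh_*(M)$ is bounded below, one would invoke the analogous Ext-characterization of $\id_R M^{\vv}$, whose boundedness hypothesis is inherited from $M$ via exactness of $\Hom_R(-,E)$.

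Part (b) should follow by the symmetric argument: start from the Ext-characterization of $\id_R M$, apply Lemma \ref{duality-isos}(b) with first argument $R/\fp$ (finitely generated, as that lemma requires) and second argument $M$ (whose bounded-below $\hh^*$ hypothesis matches the lemma), and conclude by faithfulness of $(-)^{\vv}$.

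The main technical point I expect to wrestle with is justifying the faithfulness step, since $\Hom_R(-,E_R(R/\fm))$ is not faithful on arbitrary $R$-modules—for instance it kills modules devoid of $\fm$-torsion. One handles this by either restricting the test modules in the AF-characterization to finitely generated ones (where faithfulness follows from classical Matlis duality) or by observing that the $\Tor$ and $\Ext$ modules occurring have enough torsion structure for their duals to detect vanishing. A secondary bookkeeping issue is matching the various boundedness conditions on $\hh_*(M)$, $\hh^*(M)$, $\hh_*(M^{\vv})$, and $\hh^*(M^{\vv})$, which are exchanged by dualization and dictate precisely when the Avramov--Foxby-type formulas apply.
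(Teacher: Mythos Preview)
Your approach is essentially identical to the paper's proof: characterize $\fd_R M$ via \cite[Proposition~5.3.F]{AF1991}, apply Lemma~\ref{duality-isos}(a) with $I=E$, and use faithfulness of $(-)^{\vv}$ to pass to $\Ext$ and bound by $\id_R M^{\vv}$; part~(b) is the symmetric argument.

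Your concern about faithfulness, however, is misplaced and stems from a false premise: over a Noetherian local ring $(R,\fm)$, the module $E=E_R(R/\fm)$ is an injective \emph{cogenerator} of $\operatorname{Mod}(R)$, so $\Hom_R(-,E)$ is faithfully exact on \emph{all} $R$-modules, not just finitely generated or $\fm$-torsion ones. Indeed, for any $0\neq x\in M$ the cyclic submodule $Rx\cong R/\operatorname{ann}(x)$ surjects onto $k$, yielding a nonzero map $Rx\to k\hookrightarrow E$ which extends to $M$ by injectivity of $E$. So there is nothing to wrestle with in that step, and no need to restrict the test modules.
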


We note the following remark, which will be needed in the subsequent sections:

\begin{remark} \label{id-flat}  Let $S$ be a faithfully flat $R$-algebra and $M$ an $R$-complex.  Then
\begin{enumerate}[(a)]  
\item $\fd_R M= \fd_S M\otimes_R S$;
\item  If $\hh^*(M)$ is bounded below, then $\id_R M\le \id_S M\otimes_R S$.
\end{enumerate}
\end{remark}
\begin{proof}
For part (a), note that $\fd_R M\ge \fd_S M\otimes_R S$, since $-\otimes_R S$ preserves quasi-isomorphisms and $F\otimes_R S$ is a semi-flat $S$-complex whenever $F$ is a semi-flat $R$-complex.   For the reverse inequality, we have by \cite[Propositon 5.3.F]{AF1991}, 
$$
\begin{aligned}
\fd_R M&=\sup \{ j \mid \Tor^R_j(R/\fp,M)\neq 0 \text{ for some }\fp\in \Spec R\} \\
&=\sup \{ j \mid \Tor_j^R(R/\fp,M)\otimes_R S\neq 0 \text{ for some }\fp\in \Spec R\} \\
&=\sup \{ j \mid \Tor^S_j(S/\fp S,M\otimes_R S)\neq 0 \text{ for some }\fp\in \Spec R\} \\
&\le \fd_S M\otimes_R S.
\end{aligned}
$$
For part (b), we have by \cite[Proposition 5.3.I]{AF1991} that

$$
\begin{aligned}
\id_R M&=\sup \{ j \mid \Ext^j_R(R/\fp,M)\neq 0 \text{ for some }\fp\in \Spec R\} \\
&=\sup \{ j \mid \Ext^j_R(R/\fp,M)\otimes_R S\neq 0 \text{ for some }\fp\in \Spec R\} \\
&=\sup \{ j \mid \Ext^j_S(S/\fp S,M\otimes_R S)\neq 0 \text{ for some }\fp\in \Spec R\} \\
&\le \id_S M\otimes_R S.
\end{aligned}
$$
\end{proof}

Finally, we will need the following result for zero-dimensional rings.  It is a special case of Theorem 1.1 of \cite{DIM} (or more properly, its dual), but as the proof is short, we include it here for the reader's convenience:

\begin{proposition} \label{zero-dim} Let $(R,\fm,k)$ be a zero-dimensional local ring of prime characteristic $p$.  Let $M$ be an $R$-module and $e\ge \log_p \lambda(R)$ an integer, where $\lambda(-)$ denotes length.  If $\Ext^i_R(\ee R, M)=0$ for some $i>0$ then $M$ is injective.
\end{proposition}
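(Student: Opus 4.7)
The plan is to reduce the problem to showing that an $\Ext^i$-vanishing against the residue field forces $M$ to be injective, and then to handle this by dualizing to flat dimension and exploiting the nilpotency of $\fm$. Since $p^e\ge \lambda(R)$ and $\fm^{\lambda(R)}=0$ (the descending chain of powers of $\fm$ stabilizes within $\lambda(R)$ steps, and Nakayama forces the stable value to be zero), the Frobenius-twisted $R$-action on $\ee R$ annihilates $\fm$ because $\fm\cdot \ee R=\fm^{[p^e]}\subseteq \fm^{p^e}=0$. Hence $\ee R$ is a nonzero $k$-vector space, so $\ee R\cong k^{(\Lambda)}$ as an $R$-module, and
\[\Ext^i_R(\ee R,M)\cong \prod_{\Lambda}\Ext^i_R(k,M)\]
vanishes only when $\Ext^i_R(k,M)=0$. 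Lemma \ref{duality-isos}(b), applied with the lemma's $M,N,I$ set to $k$, our $M$, and $E$ respectively, then gives $\Tor_i^R(k,M^{\vv})\cong \Ext^i_R(k,M)^{\vv}=0$.

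Next I would build a minimal free resolution $\cdots\to P_1\to P_0\to M^{\vv}\to 0$ by iterating projective covers, which exist over $R$ because every Artinian local ring is perfect in the sense of Bass. Writing $K_j$ for the $j$-th syzygy (so $K_0=M^{\vv}$ and $K_{j+1}=\ker(P_j\twoheadrightarrow K_j)$), the cover property $K_{j+1}\subseteq \fm P_j$ yields the standard identification $\Tor_j^R(k,M^{\vv})\cong K_j/\fm K_j$ for $j\ge 1$. Hence $K_i=\fm K_i$, and iterating with $\fm^{\lambda(R)}=0$ gives $K_i=\fm^{\lambda(R)}K_i=0$. The resolution therefore terminates, and $\fd_R M^{\vv}\le i-1<\infty$.

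The main obstacle is to bootstrap finite flat dimension down to zero. Suppose for contradiction that $\fd_R M^{\vv}=n\ge 1$; in the minimal resolution this means $K_{n+1}=0$ but $K_n\ne 0$, so $P_n\cong K_n$ is a nonzero free module embedded in $\fm P_{n-1}$ via the cover property. Since $P_n$ is a nonzero free $R$-module, $\operatorname{Ann}_R(P_n)=0$. On the other hand, every element of $(0:_R\fm)$ annihilates $\fm P_{n-1}$, and $(0:_R\fm)\ne 0$ because the socle of the Artinian local ring $R$ is nonzero; since $P_n$ is a submodule of $\fm P_{n-1}$, this forces $\operatorname{Ann}_R(P_n)\supseteq (0:_R\fm)\ne 0$, a contradiction. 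Hence $\fd_R M^{\vv}=0$, so $M^{\vv}$ is flat, and Corollary \ref{id-fd}(b) yields $\id_R M=\fd_R M^{\vv}=0$, i.e., $M$ is injective.
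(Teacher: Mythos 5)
Your proof is correct, but it follows a genuinely different route than the paper's.  Both arguments begin identically: $p^e\ge\lambda(R)$ forces $\fm^{p^e}=0$, so $\ee R$ becomes a nonzero $k$-vector space and the hypothesis passes to $\Ext^i_R(k,M)=0$.  After that point, the paper takes the short route: shift to a cosyzygy to assume $i=1$, note that over a zero-dimensional ring $\Ext^1_R(k,-)=0$ forces injectivity (every cyclic module has a composition series with factors $k$, so Baer's criterion follows), and then cite the Bass bound $\id_R M\le\dim R=0$ from \cite{B} to conclude.  You instead pass to the flat side via Matlis duality, invoke the fact that an Artinian local ring is perfect so $M^{\vv}$ has a minimal free resolution via iterated projective covers, show the Tor-vanishing collapses the $i$-th syzygy by nilpotence of $\fm$, and then finish with the socle-annihilator argument to show the minimal resolution actually has length zero.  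That last step is, in effect, a self-contained re-proof of the depth-zero case of the Bass/Auslander--Buchsbaum bound for not-necessarily-finitely-generated modules, and it is a nice feature of your write-up that you do not need to cite \cite{B} at all; the trade-off is that your argument is considerably longer and relies on Bass's theory of perfect rings (projective covers and flat $=$ projective) where the paper relies only on Bass's injective-dimension bound and a standard Baer-criterion reduction.  Both routes are valid; the paper's is more economical given the tools already cited elsewhere in the article.
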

\begin{proof}  By \cite[Proposition 4.1 and Corollary 5.3]{B}, if $M$ has finite injective dimension then $\id_R M\le \dim R$.  Hence, it suffices to show $\id_R M<\infty$.   By replacing $M$ with a syzygy of an injective resolution of $M$, we may assume $\Ext^1_R(\ee R, M)=0$.  Since $p^e\ge \lambda(R)$,
we have $\fm^{p^e}=0$.   Then $\fm\cdot \ee R=0$ and thus $\ee R$ is a $k$-vector space.  Hence, $\ee R\cong k^{\ell}$ as $R$-modules, for some (possibly infinite) $\ell>0$.   Thus, the condition $\Ext^1_R(\ee R,M)=0$ implies $\Ext^1_R(k,M)=0$.  Hence, $M$ is injective.
\end{proof}

\section{The Cohen-Macaulay case}

For an $R$-complex $M$ and $\fp\in \Spec R$, we let $\mu_i(\fp, M):=\dim_{k(\fp)} \Ext^i_{R_{\fp}}(k(\fp), M_{\fp})$.  If $\hh^*(M)$ is bounded below, $\mu_i(\fp, M)$ is the number (possibly infinite) of copies of $E_R(R/\fp)$ in $I^i$, where $I$ is a minimal semi-injective resolution of $M$.

\begin{theorem} \label{inj-dim} Let $(R, \fm, k)$ be a $d$-dimensional Cohen-Macaulay local ring of prime characteristic $p$ and which is $F$-finite. Let $e\ge \log_p e(R)$ be an integer, $M$ an $R$-complex, and $r=\max\{1,d\}$.
\begin{enumerate}[(a)]
\item Suppose there exists an integer $t> \sup \hh^*(M)$ such that $\Ext^i_R(\ee R, M)=0$ for $t\le i\le t+r-1$.  Then $M$ has finite injective dimension.
\item Suppose there exists an integer $t>\sup \hh_*(M)$ such that $\Tor_i^R(\ee R, M)=0$ for $t\le i\le t+r-1$. Then $M$ has finite flat dimension.
\end{enumerate}
\end{theorem}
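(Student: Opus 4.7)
The plan is to establish (a) first and then deduce (b) via Matlis duality. For (a), the strategy is to reduce to the Artinian case (Proposition \ref{zero-dim}) by modding out a minimal reduction of $\fm$. Using Lemma \ref{infinite-residue}, Corollary \ref{infinite-residue-cor}, and Remark \ref{id-flat}(b), first pass from $R$ to $R[x]_{\fm R[x]}$ to assume the residue field $k$ is infinite. Then choose a minimal reduction $\bsx = x_1, \ldots, x_d$ of $\fm$; since $R$ is Cohen-Macaulay, $\bsx$ is a regular sequence, and $\bar R := R/(\bsx)$ is Artinian with $\lambda(\bar R) = e(\bsx; R) = e(R) \le p^e$. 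The inequality $p^e \ge \lambda(\bar R)$ bounds the Loewy length of $\bar R$, giving $\fm^{p^e} \subseteq (\bsx)$; consequently $\fm$ annihilates $\ee \bar R$ under the Frobenius-twisted $R$-action, so $\ee \bar R$ is a $k$-vector space (hence a direct sum of copies of $k$) as an $R$-module.

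The central step is to translate the $d$ consecutive vanishings of $\Ext^i_R(\ee R, M)$ into a single positive-degree vanishing of $\Ext^i_{\bar R}(\ee \bar R, M')$, where $M' := \RHom_R(\bar R, M)$ is a complex over $\bar R$. By adjunction for the quotient $R \to \bar R$, $\Ext^i_R(\ee \bar R, M) \cong \Ext^i_{\bar R}(\ee \bar R, M')$, so it suffices to produce such a vanishing over $R$. The plan is to use the Koszul complex $\kos{\bsx}{\ee R}$, whose terms are direct sums of $\ee R$ and whose only nontrivial homology is $\ee R/\bsx \ee R \cong R/(\bsx^{[p^e]})$ (since $\bsx^{[p^e]}$ is a regular sequence on $R$). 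Applying $\RHom_R(-, M)$ yields a spectral sequence whose $E_1$-entries are built from $\Ext^\bullet_R(\ee R, M)$ and which converges to $\Ext^\bullet_R(R/(\bsx^{[p^e]}), M)$; the $d$ consecutive hypothesized vanishings, matching the length of the Koszul complex, should force a vanishing at a specific total degree. A supplementary short exact sequence relating $R/(\bsx^{[p^e]})$ to $\ee \bar R$ (exploiting that $\ee \bar R$ is a direct sum of copies of $k$) then yields a single vanishing $\Ext^i_R(\ee \bar R, M) = 0$. After a cosyzygy reduction to pass from the $\bar R$-complex $M'$ to a single $\bar R$-module, Proposition \ref{zero-dim} applies (since $p^e \ge \lambda(\bar R)$) to give $\id_{\bar R} M' < \infty$, and a standard change-of-rings lift for a regular sequence gives $\id_R M < \infty$.

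For (b), apply Matlis duality. By Lemma \ref{duality-isos}(a), $\Tor_i^R(\ee R, M) = 0$ if and only if $\Ext^i_R(\ee R, M^{\vv}) = 0$, where $M^{\vv} := \Hom_R(M, E_R(R/\fm))$. Since $\hh^i(M^{\vv}) \cong \hh_i(M)^{\vv}$, the hypothesis $t > \sup \hh_*(M)$ translates to $t > \sup \hh^*(M^{\vv})$, so the conditions of (a) hold for $M^{\vv}$. Applying (a) yields $\id_R M^{\vv} < \infty$, and then Corollary \ref{id-fd}(a) gives $\fd_R M < \infty$.

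The main obstacle is the spectral-sequence bookkeeping in the central step of (a): tracking how the $d$ consecutive Ext vanishings propagate through the Koszul-derived spectral sequence, and then exploiting the $k$-vector-space structure of $\ee \bar R$ (which depends crucially on the hypothesis $e \ge \log_p e(R)$) to isolate a single positive-degree vanishing over $\bar R$.
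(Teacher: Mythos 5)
Your reductions match the paper's: (b) follows from (a) by Matlis duality via Lemma~\ref{duality-isos}(a) and Corollary~\ref{id-fd}(a); passage to an infinite residue field and replacing $M$ by a cosyzygy module concentrated in degree zero are both exactly as in the paper. The problem is that the central step of your plan for (a) has a genuine gap, not just unfinished bookkeeping. Writing $K := \kos{\bsx}{\ee R}$ and $N := \ee\bigl(R/\bsx^{[p^e]}\bigr) \simeq K$, the hyperext spectral sequence $E_1^{p,q} = \Ext^q_R(K_p,M) \Rightarrow \Ext^{p+q}_R(N,M)$ has rows $q=1,\dots,d$ killed by hypothesis. One then computes that for $1\le n\le d$ the only surviving term in total degree $n$ is $E_\infty^{n,0}=E_2^{n,0}=H^n\bigl(\kos{\bsx^{[p^e]}}{\Hom_R(\ee R,M)}\bigr)$ (Koszul cohomology of $\Hom_R(\ee R,M)$, which has no a priori reason to vanish), while in degree $n=d+1$ one gets $\Ext^{d+1}_R(N,M)\cong E_\infty^{0,d+1}=\Ext^{d+1}_R(\ee R,M)$, which is precisely the vanishing you do \emph{not} have. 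So the spectral sequence produces no single-degree vanishing. Furthermore, $\ee\bigl(R/\bsx^{[p^e]}\bigr)$ is \emph{not} a $k$-vector space (in general $\fm^{[p^e]}\not\subseteq \bsx^{[p^e]}$, only $\fm^{p^e}\subseteq(\bsx)$), so your ``supplementary short exact sequence'' relating it to $\ee\bar R$ has an uncontrolled middle term $(\bsx)/(\bsx^{[p^e]})$ and cannot transfer a vanishing. In short, the Koszul reduction does not close.

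The paper's route is essentially different and avoids trying to collapse the $d$ vanishings into one. It argues by induction on $d$: the hypothesis makes $\Hom_R(S,J^0)\to\cdots\to\Hom_R(S,J^{d+1})$ exact, where $S=\ee R$ and $J$ is a minimal injective resolution of $M$, and these $\Hom_R(S,J^i)$ are injective $S$-modules, so one has the beginning of an injective $S$-resolution of $\Hom_R(S,M)$. One chooses a minimal reduction $\bsx$ of the maximal ideal of $S$ \emph{as elements of $S$} (so $\bsx$ is $S$-regular with $\pd_S\bar S = d$, and $\bar S:=S/(\bsx)$ is a $k$-vector space because $p^e\ge e(R)$), applies $\Hom_S(\bar S,-)$, and uses minimality of $J$ to force $\Hom_R(S,\partial^d)=0$, hence $\Hom_R(S,\partial^{d-1})$ surjective. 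Converting this to surjectivity of $\partial^{d-1}$ itself requires the inductive bound $\mu_i(\fp,M)=0$ for $i\ge d$, $\fp\ne\fm$, the fact that associated primes of flat modules lie in $\Ass R$, and faithfulness of $-\otimes_{R_\fp}S_\fp$. This finer analysis of the injective resolution is what replaces the Koszul spectral sequence in your sketch, and it is where the real work is; I do not see a way to avoid it along the lines you propose.
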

\begin{proof}  We first note that if (a) holds in the case $\dim R=d$, then (b) also holds in the case $\dim R=d$:  For, suppose the hypotheses of (b) hold for a complex $M$.  Then by Lemma \ref{duality-isos}(a), $\Ext^i_R(\ee R, M^{\vv})\cong \Tor_i^R(\ee R, M)^{\vv}=0$ for $t\le i\le t+r-1$.   As $\sup \hh^*(M^{\vv})=\sup \hh_*(M)$, we have by (a) that $\id_R M^{\vv}<\infty$.   Hence, $\fd_R M<\infty$ by Lemma \ref{id-fd}(a).

Thus, it suffices to prove (a).
If $\id_R M<t-1$ there is nothing to prove.  Otherwise, let  $J$ be a minimal semi-injective resolution of $M$ and $Z:=Z^{t-1}(J)$ be the (necessarily nonzero) subcomplex consisting of the cycles  of degree $t-1$ of $J$.  As $t-1\ge \sup \hh^*(M)$,  $J^{\ge t-1}$ is a minimal semi-injective resolution of 
$Z$ and $\id_R M=\id_R Z$.   Furthermore, from the exact sequence of complexes 
$$0\to J^{\ge t-1} \to J \to J^{<t-1}\to 0$$
we have that $\Ext^i_R(\ee R, Z)\cong \Ext^i_R(\ee R, M)$ for all $i\ge t$.  Hence, without loss of generality, we may assume (after shifting) that $M$ is a module concentrated in degree zero and $\Ext^i_R(\ee R,M)=0$ for $i=1,\dots,r$.
Also, by replacing $R$ with $R[x]_{\fm R[x]}$, if necessary, we may assume $R$ has an infinite residue field (Lemma \ref{infinite-residue} and Remark \ref{id-flat}).

We proceed by induction on $d$, with the case $d=0$ being established by Proposition \ref{zero-dim}.   Suppose $d\ge 1$ (so $r=d$) and we assume both (a) and (b) hold for complexes over local rings of dimension less than $d$.  Let
 $\fp\neq \fm$ be a prime ideal of $R$.  As $R$ is $F$-finite, we have $\Ext^i_{R_{\fp}}(\ee R_{\fp}, M_{\fp})=0$
for $1\le i\le d$.  As $d\ge \max\{1, \dim R_{\fp}\}$ and $e(R)\ge e(R_{\fp})$ (see \cite{L}), we have $\id_{R_\fp} M_{\fp}<\infty$ by the induction hypothesis.
Hence, $\id_{R_\fp} M_{\fp}\le \dim R_{\fp}\le d-1$ by \cite[Proposition 4.1 and Corollary 5.3]{B}.   It follows that $\mu_i(\fp, M)=0$ for all $i\ge d$ and all $\fp\neq \fm$.

For convenience, we let $S$ denote the $R$-algebra $\ee R$ and $\fn$ the maximal ideal of $S$. As $S/\fn$ is infinite, we may choose a system of parameters ${\bf x}=x_1,\dots,x_d\in \fn$ such that $({\bf x})$ is a minimal reduction of $\fn$. 
Then $\lambda_S(S/({\bf x}))=e(S)=e(R)$ and $\fm \cdot S/({\bf x})=\fn^{[p^e]} S/({\bf x})=0$, as $p^e\ge \lambda_S (S/({\bf x}))$.

As $J$ is a minimal injective resolution of $M$, we have by assumption that
\begin{equation}
\label{eq:exact1}
\Hom_R(S, J^0)\xrightarrow{\phi^0} \Hom_R(S, J^1)\to \cdots \to \Hom_R(S, J^{d}) \xrightarrow{\phi^{d}} \Hom_R(S, J^{d+1})
\end{equation}
is exact.  Let $L$ be the injective $S$-envelope of $\operatorname{coker}{\phi^{d}}$ and $\psi:\Hom_R(S, J^{d+1})\to L$ the induced map.
Hence, 
$$0\to \Hom_R(S, J^0)\to \cdots \xrightarrow{\phi^d} \Hom_R(S, J^{d+1}) \xrightarrow{\psi} L$$
is acyclic and in fact the start of an injective $S$-resolution of $\Hom_R(S, M)$.  Setting $\overline{S}=S/({\bf x})$ and applying $\Hom_S(\overline{S},-)$ to the above resolution yields
an exact sequence
\begin{equation} \label{eq:exact2} \Hom_S(\overline{S}, \Hom_R (S, J^d))\xrightarrow{\overline{\phi^d}} \Hom_S(\overline{S}, \Hom_R(S, J^{d+1}))\xrightarrow{\overline{\psi}} \Hom_S (\overline{S}, L).
\end{equation}
The exactness holds as $\pd_S \overline{S}=d$ and thus $\Ext^{d+1}_S(\overline{S}, \Hom_S(S,M))=0$.  

Since $\overline{S}$ is a finitely generated $R$-module and annihilated by $\fm$, we have $\overline{S}\cong k^t$ as $R$-modules for some $t$.  Thus, the exact sequence (\ref{eq:exact2}) is naturally isomorphic to
$$
\Hom_R(k^t, J^d)\xrightarrow{\overline{\phi^d}} \Hom_R(k^t, J^{d+1})\xrightarrow{\overline{\psi}}\Hom_S (\overline{S}, L).
$$
As $J$ is minimal, we have $\overline{\phi^d}$ is the zero map and hence $\overline{\psi}$ is injective.
\smallskip

{\it Claim:}  $\psi$ is injective.

{\it Proof:}  Let $K=\ker \psi$.   Applying $\Hom_S(\overline{S}, -)$ to  
$$0\to K\to \Hom_R(S, J^{d+1})\xrightarrow{\psi} L$$
we see that $\Hom_S(\overline{S}, K)=0$.  Since $\mu_{d+1}(\fp, M)=0$ for all primes $\fp\neq \fm$, we obtain that $J^{d+1}=\oplus_{\alpha\in I} E_R(k)$ for some (possibly infinite) index set $I$.  Since $S$ is a finite $R$-module, we have $\Hom_R(S, J^{d+1})_{\fp}\cong \Hom_{R_{\fp}}(S_{\fp}, J^{d+1}_{\fp})=0$ for all $\fp\neq \fm$.  Hence,  $\Hom_R(S, J^{d+1})_{\fq}=0$ for all $\fq\in \Spec S$, $\fq\neq \fn$.   Thus $\Hom_R(S, J^{d+1})$, and consequently $K$, is $\fn$-torsion.   Thus, if $K\neq 0$,  we must have $\Hom_S(\overline{S}, K)\neq 0$.  We conclude $K=0$ and $\psi$ is injective.

\smallskip

Now consider the complex $J$, which is a minimal injective resolution of $M$:

$$0\to J^0\xrightarrow{\partial^0} J^1\to \cdots\to J^{d-1} \xrightarrow{\partial^{d-1}} J^d\xrightarrow{\partial^d}\cdots$$

The proof will be complete upon proving:
\smallskip

{\it Claim:} $\partial^{d-1}$ is surjective.

{\it Proof:}  As $\psi$ is injective we have from (\ref{eq:exact1}) that $\phi^d=0$, and thus $\phi^{d-1}=\Hom_R(S,\partial^{d-1})$ is surjective.   Let $C=\operatorname{coker} \partial^{d-1}$.   Then 
$$0\to C^{\vv}\to (J^d)^{\vv}\to \cdots \to (J^0)^{\vv}\to M^{\vv}\to 0$$
is exact.   Note that $(J^i)^{\vv}$ is a flat $R$-module for all $i$ (e.g., Corollary \ref{id-fd}(b)).  As the set of associated primes of any flat $R$-module is contained in the set of associated primes of $R$, and as $R$ is Cohen-Macaulay of dimension greater than zero, to show $C^{\vv}=0$ it suffices to show $(C^{\vv})_{\fp}=0$ for all $\fp\neq \fm$.  So fix a prime $\fp\neq \fm$. As $S$ is a finitely generated $R$-module, we have 
$\Tor_i^R(S,M^{\vv})\cong \Ext^i_R(S,M)^{\vv}=0$ for $i=1,\dots,d$ by Lemma \ref{duality-isos}(b).  This implies $\Tor_i^{R_{\fp}}(S_{\fp}, (M^{\vv})_{\fp})=0$ for $i=1,\dots,d$.  As $R_{\fp}$ is an $F$-finite Cohen-Macaulay local ring of dimension less than $d$, and $p^e\ge e(R)\ge e(R_{\fp})$, we have that $\fd_{R_{\fp}}(M^{\vv})_{\fp}<\infty$  by the induction hypothesis on part (b).  In particular, by \cite[Corollary 5.3]{B},  $\fd_{R_{\fp}} (M^{\vv})_{\fp}\le \dim R_{\fp}\le d-1$ and thus $(C^{\vv})_{\fp}$ is a flat $R_{\fp}$-module.   Then by either \cite[Corollary 3.5]{MW} or \cite[Theorem 3.1]{DIM}, we have 
\begin{equation}
\label{eq:exact3}0\to S_{\fp} \otimes_{R_{\fp}} (C^{\vv})_{\fp}\to S_{\fp}\otimes_{R_{\fp}} ((J^{d})^{\vv})_{\fp}\to S_{\fp}\otimes_{R_{\fp}} ((J^{d-1})^{\vv})_{\fp}
\end{equation}
is exact.  Now, since $\Hom_R(S,\partial^{d-1})$ is surjective, we have using duality and Lemma \ref{duality-isos}(b) that 
$$0\to S\otimes_R (J^{d})^{\vv}\to S\otimes_R (J^{d-1})^{\vv}$$ 
is exact.  Localizing this exact sequence at $\fp$ and comparing with (\ref{eq:exact3}), we have $S_{\fp} \otimes_{R_{\fp}} (C^{\vv})_{\fp}=0$.
However, tensoring with $S_{\fp}$ over $R_{\fp}$ is faithful (e.g., \cite[Propostion 2.1(c)]{M}) and hence $(C^{\vv})_{\fp}=0$.  Hence, $C^{\vv}=0$, and thus $C=0$, which completes the proof of the Claim.
\end{proof}

As a corollary, we obtain the equivalence of conditions (a) and (c) of Theorem \ref{main-theorem}:

\begin{corollary} \label{CM-flat}  Let $(R,\fm)$ be a $d$-dimensional Cohen-Macaulay local ring of prime characteristic $p$ and $M$ an $R$-complex such that $\hh_*(M)$ is bounded above.  Suppose there exist integers $e\ge \log_p e(R)$ and $t>\sup \hh_*(M)$ such that $\Tor_i^R(\ee R, M)=0$ for $t\le i\le t+r-1$, where $r=\max\{1, d\}$. Then $M$ has finite flat dimension.
\end{corollary}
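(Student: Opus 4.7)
The plan is to reduce to the $F$-finite case of Theorem~\ref{inj-dim}(b). All transfer ingredients are already in place: Remark~\ref{id-flat}(a) gives $\fd_R M=\fd_S(M\otimes_R S)$ for any faithfully flat $R$-algebra $S$, and Corollary~\ref{Frob-flat-ext} shows that vanishing of $\Tor_i^R(\ee R,M)$ implies vanishing of $\Tor_i^S(\ee S,M\otimes_R S)$ for any flat $R\to S$. Faithful flatness also preserves $\sup\hh_*$, so the numerical hypothesis $t>\sup\hh_*(M)$ carries across unchanged. It therefore suffices to produce a faithfully flat local map $R\to S$ with $S$ Cohen-Macaulay, $F$-finite, of dimension $d$, and with $e(S)\le e(R)$, so that the bound $e\ge\log_p e(R)\ge\log_p e(S)$ is preserved.

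I would construct $S$ in two stages. First, pass to the completion $\widehat{R}$, which is faithfully flat over $R$, local, complete, Cohen-Macaulay of dimension $d$, and satisfies $e(\widehat{R})=e(R)$. Second, apply the Hochster--Huneke $\Gamma$-construction to $\widehat{R}$: for a suitable cofinite $\Gamma$ inside a $p$-base of the residue field, this produces a faithfully flat complete local extension $\widehat{R}\to S$ with $S$ an $F$-finite Cohen-Macaulay local ring of dimension $d$ whose maximal ideal is generated by $\fm\cdot S$, so that $e(S)=e(\widehat{R})=e(R)$.

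With $S$ in hand, Theorem~\ref{inj-dim}(b) applied to $S$ and the $S$-complex $M\otimes_R S$ yields $\fd_S(M\otimes_R S)<\infty$, and Remark~\ref{id-flat}(a) then forces $\fd_R M<\infty$. The main obstacle is the second step of the construction: the completion of a local ring of prime characteristic need not be $F$-finite, and arranging $F$-finiteness in a faithfully flat extension while simultaneously preserving the Cohen-Macaulay property, dimension, and multiplicity is exactly what the gamma construction is designed to accomplish. Everything else in the argument is formal transfer of hypotheses across faithfully flat extensions using the results of Section~2.
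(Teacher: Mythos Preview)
Your proposal is correct and follows essentially the same strategy as the paper: reduce to the $F$-finite case via a faithfully flat local extension that preserves dimension, Cohen--Macaulayness, and multiplicity, then invoke Theorem~\ref{inj-dim}(b); the transfer of the Tor vanishing and of $\sup\hh_*$ is handled exactly as you say by Corollary~\ref{Frob-flat-ext} and Remark~\ref{id-flat}(a). The only difference is the source of the extension: the paper cites \cite[Section~3]{Ku} to obtain a faithfully flat $d$-dimensional Cohen--Macaulay local $S$ with algebraically closed residue field and $e(S)=e(R)$ (such a complete $S$ is automatically $F$-finite), whereas you use completion followed by the Hochster--Huneke $\Gamma$-construction; both produce what is needed, so this is a minor technical variation rather than a different route.
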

\begin{proof}  By \cite[Section 3]{Ku} there exists a faithfully flat extension $S$ of $R$ such that $S$ is a $d$-dimensional CM local ring with an algebraically closed residue field and $e(S)=e(R)$.
Furthermore, by Corollary \ref{Frob-flat-ext}, $\Tor_i^S(\ee S, M\otimes_R S)=0$ for $t\le i\le t+r-1$.  Hence, by replacing $R$ with $S$ and $M$ with $M\otimes_R S$, we may assume $R$ is $F$-finite.   The result now follows from part (b) of Theorem \ref{inj-dim}. \end{proof}

\section{The general case}

We begin this section by proving a basic result concerning $E=E_R(k)$, the injective hull of the residue field of a local ring $(R,\fm, k)$.

\begin{lemma} \label{colon}  Let $(R,\fm,k)$ be a local ring. Then $$(0:_E (0:_R \fm))=\fm E.$$
\end{lemma}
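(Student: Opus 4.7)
The inclusion $\fm E\subseteq(0:_E(0:_R\fm))$ is immediate: for $r\in\fm$, $s\in(0:_R\fm)$, and $y\in E$, $s(ry)=(sr)y=0$.

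For the reverse inclusion I plan to invoke Matlis duality after reducing to the case where $R$ is complete. This reduction is harmless: $E_R(k)=E_{\widehat R}(k)$, $\fm E=\widehat\fm E$, and $(0:_R\fm)=(0:_{\widehat R}\widehat\fm)$ (the socle is a finite-dimensional $k$-vector space, so it is unchanged under the faithfully flat extension $R\to\widehat R$). With $R$ complete, $\operatorname{End}_R(E)\cong R$, $E$ is faithful as an $R$-module, and writing $(-)^{\vv}=\Hom_R(-,E)$, the Matlis dual is an exact contravariant anti-equivalence between finitely generated and Artinian $R$-modules, with $M\simeq M^{\vv\vv}$ on either side.

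Set $\sigma=(0:_R\fm)$. The easy inclusion yields $\fm E\subseteq(0:_E\sigma)\subseteq E$, and dualizing produces surjections $R=E^{\vv}\twoheadrightarrow(0:_E\sigma)^{\vv}\twoheadrightarrow(\fm E)^{\vv}$. The kernel of the composite $R\twoheadrightarrow(\fm E)^{\vv}$ is $(E/\fm E)^{\vv}=\{r\in R:r\fm E=0\}$, and by faithfulness of $E$ this equals $\sigma$. The kernel of $R\twoheadrightarrow(0:_E\sigma)^{\vv}$ is $\{r\in R:r(0:_E\sigma)=0\}$, which contains $\sigma$ trivially and is in turn contained in $\sigma$ because if $r(0:_E\sigma)=0$ then $r\fm E\subseteq r(0:_E\sigma)=0$, forcing $r\fm=0$. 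Both kernels therefore equal $\sigma$ as submodules of $R$, so the surjection $(0:_E\sigma)^{\vv}\twoheadrightarrow(\fm E)^{\vv}$ is the identity on $R/\sigma$. Thus $((0:_E\sigma)/\fm E)^{\vv}=0$, and applying $M\simeq M^{\vv\vv}$ on Artinian modules yields $(0:_E\sigma)/\fm E=0$, i.e., $\fm E=(0:_E\sigma)$.

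The main subtlety is the verification that the two kernels in $R$ produced by Matlis duality are not merely abstractly isomorphic, but are literally the same submodule $\sigma$; given this, the equality of $\fm E$ and $(0:_E\sigma)$ falls out automatically from the duality.
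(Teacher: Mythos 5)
Your proof is correct and uses the same essential strategy as the paper: reduce to the case $R$ is complete and then invoke Matlis duality. The paper's execution is a bit more direct---it dualizes the single composite $(0:_E\sigma)\hookrightarrow E\twoheadrightarrow E/\fm E$ and observes that the resulting map is the obviously zero composite $\sigma\hookrightarrow R\twoheadrightarrow R/\sigma$---whereas you dualize the two inclusions $\fm E\subseteq(0:_E\sigma)\subseteq E$ and compare kernels in $R$, but the underlying idea is identical.
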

\begin{proof} One containment is clear.  For the reverse inclusion, since $E\cong E_{\hat{R}}(\hat{R}/\hat{\fm})$,  $\hat{\fm} E=\fm E$ and $(0:_{\hat R}\hat{\fm})=(0:_R \fm)\hat{R}$, we may replace $R$ by $\hat R$ and assume $R$ is complete.  Consider the composition of maps

\begin{equation}
\label{comp1}
\Hom_R (R/(0:_R \fm), E)\cong (0:_E (0:_R \fm))\to E \to E/\fm E\cong E\otimes_R R/\fm.
\end{equation}

Dualizing, we have the composition
$$
(E\otimes_R R/\fm)^{\vv}\cong (0:_R \fm)\to R\to R/(0:_R\fm),
$$
which is clearly the zero map.  Thus, the composition (\ref{comp1}) is the zero map as well, implying $(0:_E (0:_R \fm))\subseteq \fm E$.

\end{proof}

We use the above lemma to prove the following:

\begin{lemma}
\label{image}
Let $(R,\fm,k)$ be a local ring and $\phi:J\to J'$ a homomorphism of injective $R$-modules.  Suppose $\Hom_R(R/\fm, J)\xrightarrow{\phi_*} \Hom_R(R/\fm, J')$ is zero.  Then $\phi(J)\subseteq \fm J'$.
\end{lemma}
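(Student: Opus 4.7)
The plan is to reduce the conclusion $\phi(J) \subseteq \fm J'$ to the purely internal statement $(0:_{J'} (0:_R \fm)) \subseteq \fm J'$ for the injective module $J'$, and then to prove that containment using Lemma \ref{colon} together with the structure theorem for injective modules over a Noetherian local ring. The complex $J$ will essentially disappear after the first reduction.

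First I would show that $\phi(J) \subseteq (0:_{J'}(0:_R \fm))$. The point is that for any $r \in (0:_R \fm)$ and $y \in J$, the element $ry$ is annihilated by $\fm$, since $\fm \cdot ry = (\fm r) y = 0$. Thus $ry \in (0:_J \fm)$, and identifying $\Hom_R(R/\fm, J)$ with $(0:_J \fm)$, the hypothesis that $\phi_*$ is zero forces $r\phi(y) = \phi(ry) = 0$. Since $r$ and $y$ were arbitrary, $(0:_R \fm)\cdot \phi(J) = 0$, which is the desired containment. This reduces the lemma to the assertion $(0:_{J'}(0:_R \fm)) \subseteq \fm J'$.

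Next I would invoke the Matlis structure theorem to write $J' \cong \bigoplus_\alpha E_R(R/\fp_\alpha)$. Both the operations $(0:_\bullet (0:_R \fm))$ and $\fm\cdot\bullet$ distribute over direct sums, so it suffices to check the containment on a single indecomposable injective $E_R(R/\fp)$. The case $\fp = \fm$ is exactly the content of Lemma \ref{colon}. For $\fp \neq \fm$, pick $s \in \fm \setminus \fp$; since $s$ is a unit in $R_\fp$ and $E_R(R/\fp)$ is naturally an $R_\fp$-module, multiplication by $s$ is an automorphism of $E_R(R/\fp)$, whence $\fm E_R(R/\fp) = E_R(R/\fp)$ and the containment is trivial.

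I do not expect a genuine obstacle here. The only substantive input is Lemma \ref{colon}, which handles the $\fp = \fm$ summand; the rest is a routine bookkeeping step. The one place to be careful is the very first reduction, where one must notice the trivial but essential inclusion $(0:_R \fm)\cdot J \subseteq (0:_J \fm)$ to convert the vanishing on the socle of $J$ into a statement about the annihilator of $\phi(J)$ in $J'$.
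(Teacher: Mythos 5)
Your proof is correct, and it takes a genuinely different route from the paper's. The paper reduces at the outset to both $J$ and $J'$ being indecomposable injectives $E_R(R/\fp)$ and $E_R(R/\fq)$, and then argues by cases on $(\fp,\fq)$: when $\fq\neq\fm$ the conclusion is trivial; when $\fq=\fm$ and $\fp\neq\fm$ it uses that $(0:_R\fm)$ kills the $R_\fp$-module $J$, then Lemma~\ref{colon}; and when $\fp=\fq=\fm$ it identifies $\phi$ with multiplication by an element $s\in\widehat R$ and argues $s\in\widehat\fm$. Your proof instead uses the socle hypothesis once, up front, to get the uniform containment $\phi(J)\subseteq(0:_{J'}(0:_R\fm))$; this makes $J$ disappear entirely, and the remaining statement $(0:_{J'}(0:_R\fm))\subseteq\fm J'$ is intrinsic to $J'$, so you only need to decompose $J'$ and invoke Lemma~\ref{colon} on its $E_R(k)$ summands. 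The gain is that you avoid both the bivariate case analysis and the Matlis-duality identification of endomorphisms of $E_R(k)$ with elements of $\widehat R$; Lemma~\ref{colon} does all the work on the nontrivial summand. The paper's version, on the other hand, makes visible the fact that for $\fp\neq\fm=\fq$ the conclusion holds for \emph{every} map $E_R(R/\fp)\to E_R(k)$ regardless of the socle hypothesis, which your argument does not isolate. Both are complete and correct.
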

\begin{proof} It suffices to prove the lemma in the case $J=E_R(R/\fp)$ and $J'=E_R(R/\fq)$, for $\fp, \fq\in \Spec R$,

\smallskip
\noindent
{\it Case 1:} $\fq\neq \fm$.

Then $\fm J'= \fm E_R(R/\fq) = \fm R_{\fq}\cdot E_R(R/\fq)=J'$, as $E_R(R/\fq)$ is an $R_{\fq}$-module.  So the lemma holds trivially.

\smallskip
\noindent
{\it Case 2:}  $\fq=\fm$ and $\fp\neq \fm$.  

Since $J=E_R(R/\fp)$ is an $R_{\fp}$-module, we have
$$(0:_R \fm)\phi(J)=\phi((0:_R \fm) R_{\fp}\cdot J)=\phi(0)=0.$$
Hence, $\phi(J)\subseteq (0:_{J'} (0:_R \fm))=\fm J'$ by Lemma \ref{colon}.

\smallskip
\noindent
{\it Case 3:} $\fp=\fq=\fm$.

In this case, $\phi$ is multiplication by some element $s\in \widehat{R}$.  If $s\not\in \widehat{\fm}$, then $\phi$ is an isomorphism, contradicting that $\Hom_R(R/\fm, \phi)$ is the zero map.  Thus, $s\in \widehat{\fm}$.  Hence, $\phi(J)\subseteq \widehat{\fm} J'= \fm J'$.

\end{proof}

\begin{lemma} \label{depth-zero}
Let $(R,\fm)$ be a local ring of depth zero and let $\ell$ be an integer such that $(0:_R \fm)\not\subset \fm^{\ell}$.   Let $J$ be an injective module such that $\mu_0(\fm, J)\neq 0$.  Then $(0:_J \fm^{\ell})\not\subset \fm J$.
\end{lemma}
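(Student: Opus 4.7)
The plan is to reduce the claim to a statement about $E := E_R(R/\fm)$ and then apply Matlis duality on the finite-length module $R/\fm^\ell$ so that the hypothesis $(0:_R\fm)\not\subset\fm^\ell$ translates directly into the conclusion.

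Since every injective module over a Noetherian ring decomposes as a direct sum of modules of the form $E_R(R/\fp)$ with multiplicities $\mu_0(\fp,-)$, the hypothesis $\mu_0(\fm,J)\neq 0$ guarantees that $E$ is a direct summand of $J$. Writing $J=E\oplus J'$, one has $\fm J\cap E=\fm E$, so it suffices to produce $x\in(0:_E \fm^\ell)$ with $x\notin \fm E$. By Lemma \ref{colon}, $\fm E=(0:_E(0:_R\fm))$, so the task reduces to finding $x\in E$ with $\fm^\ell x=0$ and $(0:_R\fm)\cdot x\neq 0$.

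By the hypothesis, pick $a\in(0:_R\fm)\setminus\fm^\ell$. Then $\bar a\neq 0$ in $R/\fm^\ell$, so multiplication by $a$ is a nonzero endomorphism of $R/\fm^\ell$. Because $R/\fm^\ell$ has finite length, the functor $(-)^{\vv}:=\Hom_R(-,E)$ restricts to an exact contravariant involution on finite-length $R$-modules (no completeness of $R$ is required, as $k^{\vv}\cong k$ and the general case follows by induction on length), and in particular is faithful. Therefore multiplication by $a$ remains nonzero on
$$(R/\fm^\ell)^{\vv}=\Hom_R(R/\fm^\ell,E)=(0:_E\fm^\ell),$$
so any $x\in(0:_E\fm^\ell)$ with $ax\neq 0$ satisfies $x\notin(0:_E(0:_R\fm))=\fm E$, as required.

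The only substantive point---and really the main obstacle---is the observation that Matlis duality on finite-length modules works over any Noetherian local ring, so no passage to $\widehat R$ is needed. Lemma \ref{colon} then provides the precise dictionary between $\fm E$ and $(0:_R\fm)$ under which the hypothesis $(0:_R\fm)\not\subset\fm^\ell$ becomes $(0:_E\fm^\ell)\not\subset\fm E$, and hence $(0:_J\fm^\ell)\not\subset\fm J$.
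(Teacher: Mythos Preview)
Your proof is correct and takes essentially the same approach as the paper: reduce to $J=E$, identify $\fm E$ with $(0:_E(0:_R\fm))$ via Lemma~\ref{colon}, and use faithfulness of $\Hom_R(-,E)$ to dualize the hypothesis $(0:_R\fm)\not\subset\fm^\ell$ into $(0:_E\fm^\ell)\not\subset\fm E$. The paper phrases the dualization step as applying $(-)^{\vv}$ to the nonzero composition $(0:_R\fm)\hookrightarrow R\twoheadrightarrow R/\fm^\ell$ rather than to multiplication by a chosen $a$, but the content is identical; note also that you only need faithfulness of $(-)^{\vv}$ (which holds because $E$ is an injective cogenerator), not the full involution property on finite-length modules.
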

\begin{proof}
It suffices to consider the case $J=E:=E_R(k)$.  Since the composition $(0:_R \fm)\to R\to R/\fm^{\ell}$ is nonzero,  the composition 
$$(R/\fm^{\ell})^{\vv}\cong (0:_E \fm^{\ell})\to E\to E/\fm E \cong \Hom_R(R/\fm, R)^{\vv}$$
is also nonzero.  Hence, $(0:_E \fm^{\ell})\not\subset \fm E$.
\end{proof}

\begin{lemma} \label{nonexact} Let $\phi:(R,\fm)\to (S,\fn)$ be a local homomorphism such that $S$ is a finitely generated $R$-module and $\depth S=0$.  Let $\ell$ be an integer such that $(0:_S \fn)\not\subseteq \fn^{\ell}$ and suppose $\fm S\subseteq \fn^{\ell}$.    Let $J^1\xrightarrow{\sigma} J^2 \xrightarrow{\tau} J^3$ be a sequence of maps of injective modules such that 
such that $\Hom_R(R/\fm, \sigma)=\Hom_R(R/\fm, \tau)=0$.   If  $\Hom_R(S, J^1)\xrightarrow{\sigma_*} \Hom_R (S, J^2)\xrightarrow{\tau_*} \Hom_R(S, J^3)$ is exact then $\mu_0(\fm, J^2)=0$.
\end{lemma}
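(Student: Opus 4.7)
The plan is to Matlis-dualize the sequence over $R$ and then argue by contradiction. Set $E=E_R(k)$, $(-)^{\vv}=\Hom_R(-,E)$, and $F_i=(J^i)^{\vv}$ for $i=1,2,3$; each $F_i$ is $R$-flat by Corollary \ref{id-fd}(b), and Lemma \ref{duality-isos}(b), applied to the finitely generated module $S$, supplies natural isomorphisms $(\Hom_R(S,J^i))^{\vv}\cong S\otimes_R F_i$. Because $(-)^{\vv}$ is exact and faithful (here $E$ is an injective cogenerator), the assumed exactness of the Hom-sequence is equivalent to exactness of
\[
S\otimes_R F_3\xrightarrow{1\otimes \alpha} S\otimes_R F_2\xrightarrow{1\otimes \beta} S\otimes_R F_1,
\]
where $\alpha=\tau^{\vv}$ and $\beta=\sigma^{\vv}$. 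Under this duality the hypotheses $\Hom_R(R/\fm,\sigma)=\Hom_R(R/\fm,\tau)=0$ become $\beta(F_2)\subseteq \fm F_1$ and $\alpha(F_3)\subseteq \fm F_2$, while $\mu_0(\fm,J^2)\neq 0$ becomes $F_2/\fm F_2\neq 0$. I will assume $F_2/\fm F_2\neq 0$ and derive a contradiction to the exactness of the dualized sequence.

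To build the witness, fix $w\in F_2$ with nonzero image in $F_2/\fm F_2$ and pick $u\in(0:_S \fn)\setminus \fn^\ell$ from the standing hypothesis on $\ell$; the candidate is $u\otimes w\in S\otimes_R F_2$. For membership in $\ker(1\otimes \beta)$, expand $\beta(w)=\sum_i a_i f_i$ with $a_i\in \fm$ and $f_i\in F_1$: since $\phi(a_i)\in \phi(\fm)\subseteq \fm S\subseteq \fn^\ell\subseteq \fn$, the annihilation $\phi(a_i)u=0$ gives
\[
(1\otimes \beta)(u\otimes w)=u\otimes \beta(w)=\sum_i \phi(a_i)u\otimes f_i=0.
\]
Symmetrically, the containment $\alpha(F_3)\subseteq \fm F_2$ together with $\fm S\subseteq \fn^\ell$ yields
\[
\im(1\otimes \alpha)\;\subseteq\;\fm S\otimes_R F_2\;\subseteq\;\fn^\ell(S\otimes_R F_2).
\]

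The final step, which I expect to be the main subtlety, is to verify that $u\otimes w\notin \fn^\ell(S\otimes_R F_2)$. Because $\fm S\subseteq \fn^\ell$, the quotient $S/\fn^\ell$ is annihilated by $\fm$ and so is an $R/\fm=k$-vector space, giving
\[
(S\otimes_R F_2)/\fn^\ell(S\otimes_R F_2)\;\cong\;(S/\fn^\ell)\otimes_R F_2\;\cong\;(S/\fn^\ell)\otimes_k (F_2/\fm F_2).
\]
Under this identification, $u\otimes w$ corresponds to the simple tensor $\bar u\otimes \bar w$ of two nonzero vectors in nonzero $k$-vector spaces, which is nonzero. Combining with the preceding paragraph and exactness of the dualized sequence forces $u\otimes w\in\im(1\otimes \alpha)\subseteq \fn^\ell(S\otimes_R F_2)$, contradicting this nonvanishing; hence $\mu_0(\fm,J^2)=0$. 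This final nonvanishing is where both hypotheses on $\ell$ enter in distinct, complementary ways: $\fm S\subseteq \fn^\ell$ is what turns the quotient into a $k$-bilinear tensor product, while $(0:_S \fn)\not\subseteq \fn^\ell$ is what guarantees an element $u$ whose image in $S/\fn^\ell$ is nonzero. Everything else is a routine unwinding of Matlis duality and the faithful exactness of $(-)^{\vv}$.
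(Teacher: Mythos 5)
Your proof is correct, and it takes a genuinely different route from the paper's. The paper works entirely on the injective side: it treats $\widetilde{J^i}=\Hom_R(S,J^i)$ as injective $S$-modules, shows that $(0:_{\widetilde{J^2}}\fn^{\ell})\subseteq\ker\tau_*$ and (via Lemma \ref{image}) that $\im\sigma_*\subseteq\fn\widetilde{J^2}$, and then invokes Lemma \ref{depth-zero} applied to the depth-zero ring $S$ to conclude $(0:_{\widetilde{J^2}}\fn^{\ell})\not\subseteq\fn\widetilde{J^2}$, contradicting exactness. You instead Matlis-dualize at the outset, converting the Hom-sequence of injectives into a tensor-sequence of flat modules; the contradiction then comes from exhibiting an explicit element $u\otimes w$ with $(1\otimes\beta)(u\otimes w)=0$ but $u\otimes w\notin\fn^{\ell}(S\otimes_R F_2)\supseteq\im(1\otimes\alpha)$, and the nonvanishing reduces to the elementary fact that a simple tensor of nonzero vectors in the tensor product of $k$-vector spaces $(S/\fn^{\ell})\otimes_k(F_2/\fm F_2)$ is nonzero. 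Your version bypasses Lemmas \ref{colon}, \ref{image}, and \ref{depth-zero} entirely and makes the separate roles of the two hypotheses on $\ell$ unusually transparent, so it is more self-contained and arguably shorter; the paper's version stays uniformly on the injective-resolution side, which matches the surrounding machinery of Section 4 and keeps those auxiliary lemmas in a reusable form. Both arguments are sound.
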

\begin{proof}  Let $\widetilde{J^i}=\Hom_R(S,J^i)$ for $i=1,2,3$, which are injective $S$-modules.   Since $\fm S\subseteq \fn^{\ell}$, we have that $S/\fn^{\ell}\cong k^{r}$ as $R$-modules for some $r>0$, where $k=R/\fm$.   Consider the commutative diagram
$$
\begin{tikzcd} 
\Hom_S(S/\fn^{\ell}, \widetilde{J^1})\arrow [r, "\overline{\sigma_*}"]\arrow[d, "\cong"] & \Hom_S(S/\fn^{\ell}, \widetilde{J^2}) \arrow[d, "\cong"] \\
\Hom_R(S/\fn^{\ell}, J^1) \arrow[r] \arrow[d, "\cong"] & \Hom_R(S/\fn^{\ell}, J^2) \arrow[d, "\cong"] \\
\oplus\Hom_R(k, J^1) \arrow[r, "\oplus \overline{\sigma}"] &\oplus \Hom_R(k, J^2).
\end{tikzcd}
$$
As $\overline{\sigma}$ is the zero map by hypothesis, we see that $\overline{\sigma_*}$ is zero.   Similarly, the map $$\overline{\tau_*}:\Hom_S(S/\fn^{\ell}, \widetilde{J^2})\to \Hom_S(S/\fn^{\ell}, \widetilde{J^3})$$ is zero.
This implies that $(0:_{\widetilde{J^2}} \fn^{\ell})\subseteq \ker \tau^*$.   As $\overline{\sigma_*}$ is zero, we also have that the map $\Hom_S(S/\fn, \widetilde{J^1})\to \Hom_S(S/\fn, \widetilde{J^2})$ is zero.  By Lemma \ref{image}, this implies that $\im \sigma_* \subseteq \fn \widetilde{J^2}$.

Suppose $\mu_0(\fm, J^2)\neq 0$.  Since $\Hom_S(S, E_R(R/\fm))\cong E_S(S/\fn)$ by \cite[Lemma 3.7]{M}, we then have $\mu(\fn, \widetilde{J^2})\neq 0$.  By Lemma
\ref{depth-zero}, we have that $(0:_{\widetilde{J^2}} \fn^{\ell})\not\subset \fn \widetilde{J^2}$.   Hence, $\ker \tau_*\not\subset \im \sigma_*$, a contradiction.  Therefore, $\mu_0(\fm, J^2)=0$.

\end{proof}

\begin{theorem} \label{nonCM} Let $(R,\fm, k)$ be a $d$-dimensional local ring of prime characteristic $p$ which is $F$-finite.   Let $M$ be an $R$-complex such that $\hh^*(M)$ is bounded above.  Suppose there exists an integer $t>\sup \hh^*(M)$ such that for infinitely many integers $e$ one has  $\Ext^i_R(\ee R, M)=0$ for $t\le i\le t+r-1$, where $r=\max\{1,d\}$.   Then $M$ has finite injective dimension.
\end{theorem}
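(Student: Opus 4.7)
The plan is to induct on $d = \dim R$ and, in the inductive step, apply Lemma~\ref{nonexact} to an auxiliary depth-zero $R$-algebra built from $\ee R$ by modding out a maximal $R$-regular sequence. As in Theorem~\ref{inj-dim}, I would first reduce to the case where $M$ is a module concentrated in degree zero and $\Ext^i_R(\ee R, M) = 0$ for $1 \le i \le r$ for infinitely many $e$ (by truncating a minimal semi-injective resolution $J$ of $M$ at the cycle module $Z^{t-1}(J)$ and shifting); the base case $d = 0$ is then Proposition~\ref{zero-dim} applied to any $e$ in the vanishing set with $p^e \ge \lambda(R)$.

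For the inductive step $d \ge 1$: by induction applied to each $R_\fp$ with $\fp \ne \fm$ (F-finite of dimension at most $d-1$), one has $\id_{R_\fp} M_\fp \le d - 1$, and consequently $J^i \cong E_R(k)^{\mu_i(\fm, M)}$ for all $i \ge d$ in the minimal injective resolution $J$ of $M$. It therefore suffices to show $\mu_d(\fm, M) = 0$, which combined with the above forces $J^d = 0$ and thus $\id_R M \le d - 1$. When $R$ is Cohen--Macaulay this is immediate from Theorem~\ref{inj-dim} (pick $e \ge \log_p e(R)$ from the infinite set); assume henceforth $\depth R \le d - 1$, and fix a maximal $R$-regular sequence $\mathbf{y} = y_1, \ldots, y_s$ in $\fm$.

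For each $e$ in the vanishing set, set
$$S_e := \ee R / (\mathbf{y})\ee R \cong R/(y_1^{p^e}, \ldots, y_s^{p^e}),$$
regarded as an $R$-algebra via Frobenius. Then $S_e$ is finitely generated over $R$ by F-finiteness, and $y_1^{p^e}, \ldots, y_s^{p^e}$ remains a regular sequence of length $s = \depth R$, so $\depth S_e = 0$. Iterating the long exact sequence of $0 \to \ee R \xrightarrow{y_j} \ee R \to \ee R/(y_j)\ee R \to 0$ for $j = 1, \ldots, s$ converts the hypothesis $\Ext^i_R(\ee R, M) = 0$ for $1 \le i \le d$ into $\Ext^i_R(S_e, M) = 0$ for $s + 1 \le i \le d$; in particular, $\Ext^d_R(S_e, M) = 0$. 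I would then apply Lemma~\ref{nonexact} with $\phi : R \to S_e$, $\sigma = \partial^{d-1}$ and $\tau = \partial^d$ in $J$: minimality yields $\Hom_R(k, \sigma) = \Hom_R(k, \tau) = 0$, while $\Ext^d_R(S_e, M) = 0$ gives the exactness of $\Hom_R(S_e, J^{d-1}) \to \Hom_R(S_e, J^d) \to \Hom_R(S_e, J^{d+1})$ at the middle term. The resulting conclusion $\mu_0(\fm, J^d) = \mu_d(\fm, M) = 0$ completes the proof.

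The key obstacle is choosing an integer $\ell$ satisfying the hypotheses of Lemma~\ref{nonexact}: the inclusion $\fm S_e \subseteq \fn_e^\ell$ holds for any $\ell \le p^e$, since $\fm$ acts on $S_e$ through Frobenius and hence $\fm S_e \subseteq \fn_e^{p^e}$, but the condition $(0:_{S_e} \fn_e) \not\subseteq \fn_e^\ell$ demands a socle element of $S_e$ of $\fn_e$-adic order strictly less than $p^e$. The freedom to take $e$ arbitrarily large from the infinite vanishing set, combined with the depth-zero structure of $R/(\mathbf{y})$ (via which Lemma~\ref{depth-zero} enters), is what drives the delicate socle analysis required here; this is the technical heart of the argument.
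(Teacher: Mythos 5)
Your overall strategy is the right one — induct on $d$, reduce to showing $\mu_d(\fm,M)=0$ when $R$ is not Cohen-Macaulay, and apply Lemma~\ref{nonexact} to a finite depth-zero $R$-algebra built from $\ee R$ by killing a maximal regular sequence — but the specific algebra you construct does not satisfy the hypotheses of Lemma~\ref{nonexact}, and you acknowledge this without resolving it; the gap is genuine.

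You set $S_e := \ee R/(\bsy)\ee R \cong R/(\bsy^{[p^e]})$, where $\bsy$ is a maximal $R$-regular sequence and $(\bsy)\ee R$ is the \emph{expansion} of $(\bsy)$ through the Frobenius structure map. As a ring $S_e$ then varies with $e$, and there is no reason its socle should escape $\fn_e^{p^e}$ while $\fm S_e\subseteq\fn_e^{p^e}$. It can fail outright: take $R=k[[x,y,z]]/(z^2,zy)$, so $\dim R=2$, $\depth R=1$, $\bsy=(x)$. Then $S_e=k[[x,y,z]]/(x^{p^e},z^2,zy)$ has socle spanned by $x^{p^e-1}z$, which lies in $\fn_e^{p^e}$, while $y^{p^e}\in\fm S_e$ has $\fn_e$-adic order exactly $p^e$, so $\fm S_e\not\subseteq\fn_e^{p^e+1}$. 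No integer $\ell$ satisfies both $(0:_{S_e}\fn_e)\not\subseteq\fn_e^\ell$ and $\fm S_e\subseteq\fn_e^\ell$, for any $e$ — so Lemma~\ref{nonexact} cannot be applied to your $S_e$ at all, no matter how large $e$ is.

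The fix used in the paper is to quotient $T:=\ee R$ by a maximal regular sequence $\bsx$ \emph{of the ring $T$ itself} — that is, by the ideal $(\bsx)$ of $T$, not by the expanded ideal $(\bsx)\cdot T=(\bsx^{[p^e]})$. Since $T$ and $R$ are the same underlying ring, $S:=T/(\bsx)=R/(\bsx)$ is a fixed ring independent of $e$, and one can choose $\ell$ once and for all so that $(0:_S\fn)\not\subset\fn^\ell$. The Frobenius then enters only through the $R$-module structure on $S$, giving $\fm S=\fn^{[p^e]}\subseteq\fn^{p^e}\subseteq\fn^\ell$ as soon as $p^e\ge\ell$; that is why the infinitude of the vanishing set is needed — to pick $e$ large enough. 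With this corrected $S$, the rest of your argument — the reduction to a module via truncating the minimal semi-injective resolution, the localization step giving $\mu_i(\fp,M)=0$ for $\fp\ne\fm$ and $i\ge d$, the exactness of $\Hom_R(S,J^{d-1})\to\Hom_R(S,J^d)\to\Hom_R(S,J^{d+1})$ (obtained in the paper from $\pd_T S=s<d$ and adjunction, but your iterated long-exact-sequence argument works just as well once $\bsx$ is $T$-regular), and the final appeal to Lemma~\ref{nonexact} — goes through exactly as you laid it out.
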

\begin{proof}  Precisely as in the initial paragraph of the proof of Theorem \ref{inj-dim},  we may assume $M$ is a module concentrated in degree 0 and $t=1$.  We proceed by induction on $d$, with the case $d=0$ being covered by Proposition \ref{zero-dim}.  Suppose now that $d\ge 1$ (so $r=d$) and let $\fp\neq \fm$ be a prime ideal.   Since $R$ is $F$-finite, $\Ext^i_{R_{\fp}}(\ee R_{\fp}, M_{\fp})=\Ext^i_R(\ee R, M)_{\fp}=0$ for infinitely many $e$ and $i=1,\dots, d$. Since $d\ge \max\{1, \dim R_\fp\}$, we have $\id_{R_\fp} M_{\fp}<\infty$ by the induction hypothesis.  Hence, $\id_{R_\fp} M_{\fp}\le \dim R_{\fp}\le d-1$.   Thus, $\mu_i(\fp, M)=0$ for all $i\ge d$ and all $\fp\neq \fm$.

If $R$ is Cohen-Macaulay we are done by Theorem \ref{inj-dim}.  Hence we may assume $s:=\depth R<d$ and it suffices to prove $\mu_d(\fm, M)=0$.  Let $e\ge 1$ be arbitrary and let $T$ denote the local ring $\ee R$ and $\fq$ the maximal ideal of $T$.  Let ${\bf x}=x_1,\dots,x_s\in \fq$ be a maximal regular sequence in $T$ and set $S:=T/({\bf x})$ and $\fn:=\fq S$.  Since $\depth S=0$,  there exists an integer $\ell$ (independent of $e$) such that $(0:_S \fn)\not\subset \fn^{\ell}$.   Now choose $e$ sufficiently large such that $p^e\ge \ell$ and $\Ext^i_R(T, M)=0$ for $i=1,\dots, d$.  Let
$$J:=\ 0\to J^0\to J^1\to J^2\to \cdots $$
be a minimal injective resolution of $M$, and for each $i$ let $\widetilde{J^i}$ denote $\Hom_R(T,J^i)$.
As $\Ext^i_R(T, M)=0$ for $1\le i\le d$, we see that
$$0\to \widetilde{J^0}\to \widetilde{J^1}\to \cdots \to \widetilde{J^{d}}\to \widetilde{J^{d+1}}$$
is part of an injective $T$-resolution of $\widetilde{M}:=\Hom_R(T,M)$.  Since $\pd_T S=s$ we have that $\Ext^i_T(S, \widetilde{M})=0$ for $i>s$.   In particular, as $d>s$ and $\Hom_T(S, \widetilde{J^i})\cong \Hom_R(S, J^i)$ for all $i$, we have that
$$\Hom_R(S, J^{d-1})\to \Hom_R(S, J^{d}) \to \Hom_R(S, J^{d+1})$$
is exact.   Since $\fm S\subseteq \fn^{[p^e]}\subseteq \fn^{\ell}$, we obtain that $\mu_d(\fm, M)=\mu_0(\fm, J^d)=0$ by Lemma \ref{nonexact}.   Thus, $\id_R M<\infty$.
\end{proof}

We now obtain the equivalence of conditions  (a) and (b) of Theorem \ref{main-theorem}:

\begin{corollary} \label{flat-nonCM}  Let $(R,\fm)$ be a $d$-dimensional local ring of prime characteristic $p$ and $M$ an $R$-complex such that $\hh_*(M)$ is bounded above.  Suppose there exist an integer $t>\sup \hh_*(M)$ such that  for infinitely many integers $e$ one has $\Tor_i^R(\ee R, M)=0$ for $t\le i\le t+r-1$, where $r=\max\{1,d\}$. Then $M$ has finite flat dimension.
\end{corollary}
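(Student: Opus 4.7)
The plan is to mimic the derivation of Corollary \ref{CM-flat}, with Theorem \ref{nonCM} in place of Theorem \ref{inj-dim}(b). Since Theorem \ref{nonCM} only supplies the injective-dimension statement, I will additionally run the $\Tor$-to-$\Ext$ duality used in the opening paragraph of the proof of Theorem \ref{inj-dim} to convert the question into one about injective dimension.

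First, I would reduce to the $F$-finite case. As in the proof of Corollary \ref{CM-flat}, invoke \cite[Section 3]{Ku} to obtain a faithfully flat local extension $R \to S$ with $S$ a $d$-dimensional $F$-finite local ring; the Cohen--Macaulay and multiplicity clauses used there are irrelevant for the present reduction. Corollary \ref{Frob-flat-ext} together with the faithful flatness of $\ee S$ over $\ee R$ transfers the hypothesis on $M$ to
\[
\Tor_i^S(\ee S, M \otimes_R S) = 0 \quad \text{for } t \le i \le t+r-1
\]
and for the same infinite set of exponents $e$. By Remark \ref{id-flat}(a), $\fd_R M = \fd_S(M \otimes_R S)$, so it suffices to prove that $M \otimes_R S$ has finite flat dimension over $S$, and we may assume $R$ itself is $F$-finite.

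Next, I would dualize. Set $E = E_R(R/\fm)$ and $(-)^{\vv} = \Hom_R(-, E)$. Lemma \ref{duality-isos}(a) gives
\[
\Ext^i_R(\ee R, M^{\vv}) \cong \Tor_i^R(\ee R, M)^{\vv},
\]
so $M^{\vv}$ inherits the corresponding $\Ext$-vanishing in degrees $t \le i \le t+r-1$ for infinitely many $e$. Because $\Hom_R(-, E)$ is exact, $\sup \hh^*(M^{\vv}) = \sup \hh_*(M) < t$, so the hypotheses of Theorem \ref{nonCM} are met by $M^{\vv}$, giving $\id_R M^{\vv} < \infty$. Corollary \ref{id-fd}(a) then yields $\fd_R M \le \id_R M^{\vv} < \infty$, as required.

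I do not foresee a genuine obstacle here: the proof is essentially a concatenation of three ingredients already developed in the paper — the Kunz-type faithfully flat extension used to pass to $F$-finiteness, the Matlis duality between $\Tor$ and $\Ext$ from Lemma \ref{duality-isos}(a), and Theorem \ref{nonCM}. The one routine verification is that Matlis duality transforms boundedness above of $\hh_*(M)$ into boundedness above of $\hh^*(M^{\vv})$ with the same supremum, which is immediate from the exactness of $\Hom_R(-, E)$.
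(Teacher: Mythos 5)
Your proposal is correct and matches the paper's intent: the published proof is the one-line instruction to run the argument of Corollary \ref{CM-flat} with Theorem \ref{nonCM} replacing Theorem \ref{inj-dim}, and since Theorem \ref{nonCM} is stated only for $\Ext$ and injective dimension, the Matlis-duality step you supply (mirroring the opening paragraph of the proof of Theorem \ref{inj-dim}, via Lemma \ref{duality-isos}(a) and Corollary \ref{id-fd}(a)) is exactly the implicit piece the reader is meant to fill in. The reduction to the $F$-finite case via \cite[Section 3]{Ku}, Corollary \ref{Frob-flat-ext}, and Remark \ref{id-flat}(a) is likewise the paper's route.
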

\begin{proof} The argument is similar to the proof of Corollary \ref{CM-flat}, except one uses Theorem \ref{nonCM}  in place of Theorem \ref{inj-dim}.
\end{proof}

We close the paper by giving a proof that a theorem of Avramov and Miller \cite{AM} concerning finitely generated modules over complete intersections holds for arbitrary modules, and in fact any complex whose homology is bounded above.  The proof mostly follows the argument of Dutta \cite{D}, until the end when we apply \cite[Theorem 1.1]{DIM}.

\begin{theorem} \label{ci} Let $(R,\fm)$ be a local complete intersection ring of prime characteristic $p$.  Let $M$ be an $R$-complex such that $\hh_*(M)$ is bounded above.   Suppose $\Tor_i^R(\ee R,M)=0$ for some $e>0$ and some $i>\sup \hh_*(M)$.  Then $M$ has finite flat dimension.
\end{theorem}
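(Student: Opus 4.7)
The plan is in two parts: use Dutta's argument to bootstrap the single $\Tor$ vanishing into many vanishings, and then invoke \cite[Theorem 1.1]{DIM} to finish.

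I would first do standard preprocessing. By truncating a semi-flat resolution of $M$ at degree $\sup \hh_*(M)$ and passing to the corresponding syzygy (parallel to the initial reduction in the proof of Theorem \ref{inj-dim}), I reduce to the case that $M$ is a module concentrated in degree $0$ with $\Tor_i^R(\ee R, M) = 0$ for some $i \ge 1$ and $e \ge 1$. A standard faithfully flat extension, such as the one used in the proof of Corollary \ref{CM-flat} and constructed as in \cite[Section 3]{Ku}, preserves the complete intersection property together with the relevant hypothesis and conclusion (by Corollary \ref{Frob-flat-ext} and Remark \ref{id-flat}). I may therefore assume $R$ is $F$-finite, complete, with algebraically closed residue field, and of the form $R = Q/(f_1, \ldots, f_c)$ with $Q$ regular local and $f_1, \ldots, f_c$ a $Q$-regular sequence.

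The main step is to follow Dutta's argument in \cite{D} verbatim. Its central inputs are a change-of-rings spectral sequence built from the Koszul resolution of $R$ over $Q$ and the rigidity of $\Tor$ over a complete intersection in the sense of Avramov--Buchweitz; both ingredients are functorial in the second argument and do not require $M$ to be finitely generated. Dutta's propagation then shows that the single vanishing $\Tor_i^R(\ee R, M) = 0$ forces $\Tor_j^R(\ee' R, M) = 0$ for $\dim R + 1$ consecutive positive values of $j$ and for infinitely many $e'$. Since \cite[Theorem 1.1]{DIM} places no finite generation hypothesis on $M$, it applies directly at this point to yield $\fd_R M < \infty$.

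The principal obstacle is verifying that every step in Dutta's argument really does pass to non--finitely-generated $M$: any length-function or Nakayama-type invocations in \cite{D} would need to be replaced by functorial or spectral-sequence substitutes. Because the structural backbone of Dutta's proof is the intrinsically functorial Koszul/Eisenbud operator machinery attached to the CI presentation, I expect these replacements to be routine rather than a serious difficulty.
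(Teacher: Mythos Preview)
Your proposal is correct and follows essentially the same route as the paper: verify that Dutta's propagation argument in \cite{D} goes through for arbitrary bounded above complexes (yielding $\Tor_j^R(\ee R,M)=0$ for all $j\ge i$ and all large $e$), then invoke \cite[Theorem~1.1]{DIM}. The paper is slightly leaner---it applies Dutta's Steps~0--3 directly to the complex rather than first reducing to a module, and it does not separately reduce to the $F$-finite case---but the substance is the same; one small caution is that Dutta's actual inputs are the change-of-rings spectral sequence and the finite projective dimension of $\ee R$ over the regular cover, not Avramov--Buchweitz rigidity per se, so when you carry out the verification you should track Dutta's own steps rather than the rigidity framework.
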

\begin{proof}  Suppose $\Tor_i^R(\ee R,M)=0$ for some $e>0$ and some $i>\sup \hh_*(M)$.  A routine check shows that Steps 0-3 of the proof given in \cite{D} remain valid for arbitrary bounded above complexes.   Steps 2 and 3 yield that  $\Tor_{i+1}^R(\ee R, M)=0$ and $\Tor_i^R(\ef R, M)=0$.   Iterating, we obtain that $\Tor_j^R(\ee R, M)=0$ for all $j\ge i$ and all sufficiently large $e$.     By \cite[Theorem 1.1]{DIM}, $M$ has finite flat dimension.

\end{proof}

We deduce the dual version of the above result for complexes over $F$-finite local complete intersections:

\begin{corollary}  Let $(R,\fm)$ be a local complete intersection ring of prime characteristic $p$, and assume $R$ is $F$-finite.   Let $M$ be an $R$-complex such that $\hh^*(M)$ is bounded above.    Suppose $\Ext_i^R(\ee R,M)=0$ for some $e>0$ and some $i>\sup \hh^*(M)$.  Then $M$ has finite injective dimension.
\end{corollary}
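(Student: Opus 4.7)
The plan is to derive this corollary from Theorem \ref{ci} by passing through Matlis duality. Let $E = E_R(k)$ and write $(-)^\vv = \Hom_R(-,E)$.

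First I would carry out the standard reduction used in the opening paragraph of the proof of Theorem \ref{inj-dim}: letting $J$ be a minimal semi-injective resolution of $M$ and setting $Z := Z^{i-1}(J)$, the hypothesis $i-1\ge \sup\hh^*(M)$ gives $\id_R M=\id_R Z$, and the exact sequence $0\to J^{\ge i-1}\to J\to J^{<i-1}\to 0$ yields $\Ext^{j}_R(\ee R,Z)\cong \Ext^{j}_R(\ee R,M)$ for every $j\ge i$, since the truncation $J^{<i-1}$ is a bounded-above complex of injectives. Shifting $Z$ into degree $0$ and relabeling, I may assume that $M$ is a module concentrated in degree $0$ and that $\Ext^{1}_R(\ee R,M)=0$ for some $e>0$.

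Next I would dualize. Since $R$ is $F$-finite, $\ee R$ is a finitely generated $R$-module; and since the reduced $M$ is a module, $\hh^*(M)$ is bounded below. Lemma \ref{duality-isos}(b) therefore applies with $\ee R$ in the first slot, $M$ in the second, and $I=E$, producing the isomorphism
$$\Tor_1^R(\ee R, M^\vv)\cong \Hom_R(\Ext^1_R(\ee R,M),E)=0.$$
The complex $M^\vv$ is a module, so $\sup\hh_*(M^\vv)=0<1$; thus Theorem \ref{ci} applied to $M^\vv$ gives $\fd_R M^\vv<\infty$, and Corollary \ref{id-fd}(b)—whose hypothesis $\hh^*(M)$ bounded below is satisfied because $M$ is a module—then delivers $\id_R M=\fd_R M^\vv<\infty$, as required.

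The only genuine obstacle is verifying that the reduction in the first paragraph produces exactly a single Ext-vanishing in cohomological degree $1$ after shifting; this is the same computation already used in the reductions of Theorems \ref{inj-dim} and \ref{nonCM}, so nothing new is needed. The $F$-finite hypothesis enters in precisely one place, namely to make $\ee R$ finitely generated so that Lemma \ref{duality-isos}(b) is available; without it this proof strategy—turning the Ext vanishing into a Tor vanishing in order to invoke Theorem \ref{ci}—no longer goes through.
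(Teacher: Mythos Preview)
Your proof is correct and follows essentially the same route as the paper: reduce to a module via the truncation argument from Theorem \ref{inj-dim}, use $F$-finiteness and Lemma \ref{duality-isos}(b) to convert the Ext vanishing into $\Tor_1^R(\ee R,M^{\vv})=0$, apply Theorem \ref{ci} to get $\fd_R M^{\vv}<\infty$, and conclude via Corollary \ref{id-fd}(b). The paper's write-up is more terse (it does not bother to pin down $i=1$ after shifting), but the logical structure is identical.
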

\begin{proof}  By the argument in the initial paragraph of Theorem \ref{inj-dim}, we may assume $M$ is a module concentrated in degree zero.  
As $R$ is $F$-finite, we have by Lemma \ref{duality-isos} that $\Tor_i^{R}(\ee R, M^{\vv})=0$ for  some positive integers $i$ and $e$, where $(-)^{\vv}$ denotes
the functor $\Hom_{R}(-, E_{R}(R/\fm))$.   By Theorem \ref{ci}, we have $\fd_R M^{\vv}<\infty$. Hence, by Lemma \ref{id-fd}, $\id_R M=\fd_R M^{\vv}<\infty$.   
\end{proof}

\end{document}